\documentclass[11pt,a4paper]{amsart}
\usepackage{amsmath,amsfonts,amsthm,amsopn,color,amssymb,enumitem}
\usepackage{palatino}
\usepackage{graphicx}
\usepackage{caption}
\usepackage{subcaption}
\usepackage[colorlinks=true]{hyperref}
\hypersetup{urlcolor=blue, citecolor=red, linkcolor=blue}

\usepackage{esint}
\usepackage[title]{appendix}

\numberwithin{equation}{section}

\newtheorem{theorem}{Theorem}[section]
\newtheorem{lemma}[theorem]{Lemma}

\newtheorem{proposition}[theorem]{Proposition}
\newtheorem{remark}[theorem]{Remark}



\newcommand{\e}{\varepsilon}

\newcommand{\R}{\mathbb{R}}

\newcommand{\D}{\mathbb{D}}

\newcommand{\weakto}{\rightharpoonup}

\DeclareMathOperator{\supp}{supp}

\renewcommand{\b }{\beta }

\newcommand{\N}{\mathbb{N}}

\def\bbm[#1]{\mbox{\boldmath $#1$}}
\newcommand{\beq }{\begin{equation}}
	\newcommand{\eeq }{\end{equation}}


\def\sideremark#1{\ifvmode\leavevmode\fi\vadjust{\vbox to0pt{\vss
			\hbox to 0pt{\hskip\hsize\hskip1em
				\vbox{\hsize3cm\tiny\raggedright\pretolerance10000
					\noindent #1\hfill}\hss}\vbox to8pt{\vfil}\vss}}}%


\setlength{\hoffset}{-0.5cm}
\setlength{\textwidth}{14cm}

\begin{document}
	
	\title[Prescribing curvatures in the disk]{Prescribing curvatures in the disk via conformal changes of the metric: the case of negative Gaussian curvature}

\author {Rafael L\'opez-Soriano, Francisco J. Reyes-S\'anchez, David Ruiz}

\address{Universidad de Granada \\
	IMAG, Departamento de An\'alisis Matem\'atico \\
		Campus Fuentenueva 	\\	18071 Granada, Spain. }

\

 \email{ralopezs@ugr.es, fjreyes@ugr.es, daruiz@ugr.es}	
	
%
%
	
	\thanks{R. L.-S. and D. R. has been supported by the  MICIN/AEI through
		the Grant PID2021-122122NB-I00 and the \emph{IMAG-Maria de Maeztu} Excellence Grant CEX2020-001105-M, and by J. Andalucia via the Research Group FQM-116.  R.L.-S. has also been supported by the grant Juan de la Cierva Incorporaci\'on fellowship (JC2020-046123-I), funded by MCIN/AEI/10.13039/501100011033, and by the European Union Next Generation EU/PRTR. F.J.R.S has  been supported by a PhD fellowship (PRE2021-099898) linked to the \emph{IMAG-Maria de Maeztu} Excellence Grant CEX2020-001105-M funded by MICIN/AEI.} 	

	\keywords{Prescribed curvature problem, conformal metric, blow-up analysis, variational methods.}
	
	\subjclass[2020]{35J20, 58J32, 35B44}
	
	\begin{abstract}
		This paper deals with the question of prescribing the Gaussian curvature on a disk and the geodesic curvature of its boundary by means of a conformal deformation of the metric. We restrict ourselves to a symmetric setting in which the Gaussian curvature is negative, and we are able to give general existence results. Our approach is variational, and solutions will be searched as critical points of an associated functional. The proofs use a perturbation argument via the monotonicity trick of Struwe, together with a blow-up analysis and Morse index estimates. We also give a nonexistence result that shows that, to some extent, the assumptions required for existence are necessary.
	\end{abstract}
	
	\maketitle
	
	\section{Introduction}\label{section-introduction}
Let $(\Sigma,g_0)$ denote a compact Riemannian surface $\Sigma$ equipped with a certain metric $g_0$. We say that another metric $g$ is conformal to $g_0$ if $ {g}= \rho \, g_0$, where $\rho: \Sigma \to (0, +\infty)$ is the conformal factor.
	A classical problem in Geometry, dating back to \cite{Berger1971,KazdanWarner1974}, is the prescription of the Gaussian curvature on a compact Riemannian surface under a conformal change of
	the metric. In this case if we take the conformal factor as $\rho=e^u$, the curvature transforms according to the law:

	\begin{equation*}
		-\Delta u+2 K_0(x)=2K(x)e^{u}.
	\end{equation*}
	Here $\Delta$ stands for the Laplace-Beltrami operator associated to the metric $g_0$, and $K_0$, ${K}$ denote the Gaussian curvatures with respect to $g_0$ and $g$, respectively. The solvability of this equation has been studied for several decades, and it is not possible to give here a comprehensive list of references. The case $\Sigma = \mathbb{S}^2$ is known as the Nirenberg problem, and reveals to be particularly delicate. We refer the interested reader to Chapter 6 in the book \cite{Aubin1998}.
	
	If $\Sigma$ has a boundary, other than the Gaussian curvature in $\Sigma$ it is natural to prescribe also the geodesic curvature  of $\partial\Sigma$. Denoting by  $h_0$ and ${h}$ the geodesic curvatures of the boundary with respect to $g_0$ and ${g}$ respectively, we are led to the boundary  value problem:

	\begin{equation}\label{sigma}
		\left\{
		\begin{array}{ll}
			-\Delta u+2 K_0(x)=2K(x)e^{u}&\text{in }\Sigma,\\
			\\
			\frac{\partial u}{\partial \nu}+2 {h}_0(x)=2h(x)e^{u/2}&\text{on }\partial\Sigma.\\
		\end{array}\right.
	\end{equation}

	The case of constant curvatures $K$, $h$ has been considered in \cite{Brendle2002}, where the author used a parabolic flow to obtain solutions in the limit. Some classification results for the half-plane are also available in \cite{GalvezMira2009, LiZhu1995, Zhang2003}. The case of nonconstant curvatures was first addressed in \cite{Cherrier1984}, see also \cite{BaoWangZhou2014, LopezSorianoMalchiodiRuiz2019} for related work. In particular, the case $K<0$ has been relatively well understood in \cite{LopezSorianoMalchiodiRuiz2019} for non \textbf{}simply connected surfaces. Recently, some existence results have been provided for all genuses by using a new mean-field  approach in \cite{BattagliaLS2023}. The blow-up analysis of solutions in this framework has also been studied, see \cite{CajuCruzSantos2024, LopezSorianoMalchiodiRuiz2019} for surfaces with negative Euler characteristic.

	If $\Sigma=\mathbb{D}$, by the classical Uniformization Theorem we can pass to the standard metric via a conformal change of the metric. Hence we can assume that $g_0$ is the standard metric on the disk, and equation \eqref{sigma} becomes:
	
	\begin{equation}
		\left\{
		\begin{array}{ll}
			-\Delta u=2K(x)e^{u}&\text{in }\mathbb{D},\\
			\\
			\frac{\partial u}{\partial \nu}+2=2h(x)e^{u/2}&\text{on }\partial\mathbb{D}.\\
		\end{array}\right.
		\label{eq: Problem 1}
	\end{equation}
	
	Integrating (\ref{eq: Problem 1}) we obtain the following identity, which is nothing but the Gauss-Bonnet theorem for the metric ${g}$:
	
	\begin{equation}
		\int_{\mathbb{D}}Ke^u+\int_{\partial\mathbb{D}}he^{u/2}=2\pi.
		\label{eq: Gauss-Bonnet}
	\end{equation}
	
	In particular, if $K<0$, this forces $h$ to be positive somewhere.
	
	The problem \eqref{eq: Problem 1} has been treated in several works. For example, the case $h=0$ and $\partial_{\nu}K=0$ on $\partial \D$ has been treated in \cite{ChangYang1988} passing to a Nirenberg problem via reflection (see also \cite{GuoHu1995}). The case $K=0$ has attracted more attention, see \cite{ChangLiu1996,  GuoLiu2006,  LiLiu2005, DaLioMartinazziRiviere2015, LiuHu2005}. A flow approach has been applied in \cite{Gehrig2020} to prove the existence of solutions with constant geodesic curvature. Recently, this approach has been generalized to the case of positive and nonconstant curvatures, \cite{Struwe2024}.
	
	However, very few existence results are available on equation \eqref{eq: Problem 1} when both curvatures are not constant. If both curvatures are positive the existence has been addressed in \cite{CruzRuiz2018} (under symmetry assumptions) and more recently in \cite{Ruiz2023}. We would like to stress that if $K \geq 0$, $h \geq 0$, then the total area and length:
	
	$$ \int_{\mathbb{D}}Ke^u, \ \int_{\partial\mathbb{D}}he^{u/2},$$
	are a priori bounded by the Gauss-Bonnet formula \eqref{eq: Gauss-Bonnet}. Moreover, as it will be commented later, the existence of solutions is linked to the blow-up analysis of \eqref{eq: Problem 1}. If both curvatures are nonnegative, one can use the blow-up analysis of \cite{JevnikarLopezSorianoMedinaRuiz2022}: the blow-up point is unique, located at $\partial \D$, and the asymptotic profile is well understood. On the other hand, the existence of blowing-up solutions has been recently proved (see  \cite{BattagliaCruzBlazquezPistoia2023,BattagliaMedinaPistoia2021, BattagliaMedinaPistoia2023}) via singular perturbation methods.
	
	In this paper we plan to give general existence results to (\ref{eq: Problem 1}) when $K(x)<0$. This is probably the most intricate scenario: first, because the Gauss-Bonnet does not give bounds on area and length. Second, because for $K<0$ the blow-up analysis is much more complex, different types of blow-up points could appear and the area and length could be unbounded, see \cite{LopezSorianoMalchiodiRuiz2019}. For these reasons, in this paper we restrict ourselves to a symmetric setting, as a first step in this direction.
	
	The problem (\ref{eq: Problem 1}) is the Euler-Lagrange equation of the energy functional $ \mathcal{I}:H^1(\mathbb{D})\rightarrow\mathbb{R}$,
	
	\begin{equation}\label{eq: Energy Functional Disk}
		\mathcal{I}(u)=\int_{\mathbb{D}}\left(\frac{1}{2}|\nabla u|^2-2K(x)e^
		u\right)+\int_{\partial\mathbb{D}}\left(2u-4h(x)e^{u/2}\right).
	\end{equation}
	
	Note that the interior term and the boundary one might be in competition, and a priori it is not clear whether $\mathcal{I}$ is bounded from below or not. This phenomenon is studied in \cite{LopezSorianoMalchiodiRuiz2019}, where the following scale-invariant function is introduced:
	\begin{equation*}
	\mathfrak{D}:\partial\mathbb{D}\longrightarrow\mathbb{R}, \ \ 	\mathfrak{D}(x)=\frac{h(x)}{\sqrt{|K(x)|}}.
	\end{equation*}
	
	This function has an important role in the behavior of the functional $\mathcal{I}$, as well as in the blow-up analysis of solutions to (\ref{eq: Problem 1}).
		
	We now set the following assumptions:
    
	\begin{equation}\tag{$H_1$}\label{eq: hypothesis_1}
	K:\mathbb{D}\rightarrow\mathbb{R} \text{ is a }\mathcal{C}^{1}\text{ function such that }K\leq0\text{ in }\D\text{ and }K<0\text{ on }\partial\D,
	\end{equation}
	
	\begin{equation}\tag{$H_2$}\label{eq: hypothesis_2}
		h:\partial\mathbb{D}\rightarrow\mathbb{R} \text{ is a }\mathcal{C}^{1}\text{ function such that }\mathfrak{D}(p)>1\text{ for some }p\in\partial\mathbb{D}\text{, and}
	\end{equation}
	
	\begin{equation}\tag{$H_3$}\label{eq: hypothesis_3}
		\mathfrak{D}_\tau(p)\neq0\text{ for any }p\in\partial\mathbb{D}\text{ such that }\mathfrak{D}(p)=1.
	\end{equation}
	
	Notice that $\mathfrak{D}_{\tau}$ means the derivative of $\mathfrak{D}$ with respect to the tangential direction $\tau$ along $\partial\mathbb{D}$. 
	
We also assume certain symmetry for both curvatures. Two types of symmetry groups $G$ will be considered. The first one is the group generated by $g_k$ the rotation of angle $2\pi/k$, $k \in \mathbb{N}$, $k \geq 2$. The second one is the radially symmetric case $G=O(2)$. We say that a function $f$ is $G$-symmetric if $f(x)=(f\circ g)(x)$ for all $g\in G$ and for all $x$ in the domain of $f$. Then, we assume that:

	\begin{equation}\tag{$G$}\label{eq: hypothesis_symmetry}
		K \text{ and } h \text{ are } G\text{-symmetric, where either } G = \langle g_k \rangle \text{ or } G=O(2).
	\end{equation}
	
	Let us point out that if $G=O(2)$ then $\mathfrak{D}$ is constant and hence (\ref{eq: hypothesis_2}) implies that (\ref{eq: hypothesis_3}) is trivially satisfied.
	
	Our main result in this paper is the following:
	
	\begin{theorem}\label{thm: existence}
		Under hypotheses (\ref{eq: hypothesis_1}), (\ref{eq: hypothesis_2}), (\ref{eq: hypothesis_3}) and (\ref{eq: hypothesis_symmetry}), the problem (\ref{eq: Problem 1}) admits a $G$-symmetric solution.
	\end{theorem}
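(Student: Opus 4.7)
The plan is to obtain a $G$-symmetric solution as a critical point of the functional $\I$ restricted to the subspace $H^1_G(\D)$ of $G$-invariant functions; by the principle of symmetric criticality, such a critical point is automatically a critical point of $\I$ on the full space $H^1(\D)$, hence solves (\ref{eq: Problem 1}). The first task is to establish a mountain pass geometry inside $H^1_G(\D)$. The quadratic part of $\I$ gives a strict local minimum behavior on small $H^1$-spheres centered at $0$, while hypothesis (\ref{eq: hypothesis_2}), combined with a $G$-invariantization of a standard bubble concentrated near the orbit of $p$, produces a ray along which $\I\to -\infty$. These two facts supply the basic mountain pass set-up.

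The main difficulty is that Palais--Smale sequences for $\I$ need not be bounded in $H^1$: the exponential nonlinearities are borderline, and unlike in the case $K\geq 0$ the Gauss--Bonnet identity (\ref{eq: Gauss-Bonnet}) does not yield a priori bounds on total area and length. To circumvent this, I would employ Struwe's monotonicity trick: introduce a one-parameter family $\I_\lm$ by multiplying a suitable piece of the nonlinearity (for instance the boundary term $h\,e^{u/2}$) by $\lm$, so that the mountain pass level $\lm \mapsto c(\lm)$ is monotone on a suitable interval. At every point of differentiability of $c$, which is almost every $\lm$, one extracts a bounded Palais--Smale sequence at level $c(\lm)$, and hence a critical point $u_\lm \in H^1_G(\D)$ of $\I_\lm$. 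A crucial added output of this construction (e.g.\ via the Fang--Ghoussoub refinement of the mountain pass) is that the Morse index of $u_\lm$ within $H^1_G(\D)$ is at most $1$.

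It then remains to pass to the limit along a sequence $\lm_n \to 1$ of such good parameters. If $\{u_{\lm_n}\}$ is bounded in $L^\infty$, standard elliptic theory gives convergence to a solution of (\ref{eq: Problem 1}). Otherwise, one runs a blow-up analysis in the spirit of \cite{LopezSorianoMalchiodiRuiz2019}: for $K<0$ concentration can only occur at boundary points $p$ with $\mathfrak{D}(p)\geq 1$, with asymptotic profile classified by the half-plane results of \cite{GalvezMira2009, LiZhu1995, Zhang2003}; hypothesis (\ref{eq: hypothesis_3}) is invoked to exclude degenerate blow-up at points where $\mathfrak{D}=1$. The $G$-symmetry is then used to derive a contradiction: in the cyclic case $G=\langle g_k\rangle$, any blow-up point is accompanied by its $k\geq 2$ images, and each localized bubble generates a direction along which the second variation of $\I_{\lm_n}$ is strictly negative; these directions, being supported near disjoint pieces of $\partial\D$, are mutually almost-orthogonal, so the Morse index of $u_{\lm_n}$ in $H^1_G(\D)$ eventually exceeds $1$, contradicting the mountain pass bound. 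The radial case $G=O(2)$ requires a separate treatment, reducing (by uniqueness of the radial extension) to an ODE problem on $[0,1]$ where compactness of minimising / min-max sequences can be verified by hand using that $\mathfrak{D}$ is constant and strictly greater than $1$.

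I expect the main obstacle to be precisely this combination of blow-up analysis with Morse index estimates. Constructing enough negative directions of the Hessian localised at each point of the $G$-orbit of a blow-up point, and ensuring that they truly count towards the $G$-symmetric Morse index (rather than being killed by the symmetry constraint), demands a sharp control on the asymptotic bubble profiles and on their interaction. The $O(2)$-symmetric case is particularly delicate, since the orbit of a single boundary point is the entire circle $\partial\D$ and the naive ``count the blow-up points'' strategy does not apply; there one presumably has to exploit the non-degeneracy provided by $\mathfrak{D}>1$ via a direct ODE compactness argument.
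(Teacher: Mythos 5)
Your overall architecture (mountain pass in $H^1_G(\mathbb{D})$, monotonicity trick with a Morse index bound, blow-up analysis of the approximating solutions, separate ODE treatment of the radial case) is the one the paper follows, but the decisive step of your cyclic-case argument does not work as stated. You propose to reach a contradiction by observing that a blow-up point comes with its $k\geq 2$ images and that each localized bubble furnishes a negative direction of the second variation, so that the Morse index in $H^1_G(\mathbb{D})$ exceeds $1$. The problem --- which you flag yourself but do not resolve --- is that the $k$ localized negative directions are not $G$-invariant; their span meets $H^1_G(\mathbb{D})$ only in the one-dimensional line spanned by the symmetrized sum $\Psi_n=\sum_{j=0}^{k-1}\psi_n\circ g^{j}$. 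Hence a single $G$-orbit of bubbles contributes exactly one unit to the $G$-symmetric Morse index, which is perfectly compatible with the bound $ind_G(u_n)\leq 1$, and no contradiction follows from index counting alone. The paper's Lemma \ref{lem: Blowup} makes precisely this point: the symmetric index bound $m$ only yields $|S_1|\leq k\,m$, i.e.\ it still allows one full orbit of blow-up points. The actual contradiction in the paper's Case 1 is obtained differently: once $(H_3)$, through the alternative in Theorem \ref{thm: Blowup}, forces $\int_{\mathbb{D}}e^{u_n}$ to be bounded, the quantization of part (2) gives $\chi_n\to 2\pi j$ with $j\geq k\geq 2$, while directly $\chi_n=\int_{\mathbb{D}}\tilde K_n+\int_{\partial\mathbb{D}}\tilde h_n\to 2\pi$. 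You would need to supply this (or an equivalent) Gauss--Bonnet quantization step; without it the cyclic case is not closed.

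Two smaller issues. First, your description of the mountain pass geometry is inaccurate: $0$ is not a strict local minimum of $\mathcal{I}$, since $\mathcal{I}(-n)\to-\infty$ along negative constants because of the linear boundary term $\int_{\partial\mathbb{D}}2u$. The correct geometry separates two descending directions (large negative constants and concentrating bubbles) by the hypersurface $M_\delta=\{\int_{\partial\mathbb{D}}e^{u/2}=\delta\}$, on which $\mathcal{I}$ is bounded below via the Lebedev--Milin inequality. Second, taking the perturbation to be a multiplicative factor $\lambda$ on the boundary term alone is risky: the monotonicity trick produces bounded Palais--Smale sequences only when the $\lambda$-derivative of the functional is coercive, and here $\partial_\lambda\mathcal{I}_\lambda(u)=-4\int_{\partial\mathbb{D}}h\,e^{u/2}$ controls neither $\|\nabla u\|_{L^2}$ nor the average of $u$. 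The paper instead adds $\varepsilon\mathcal{J}(u)$ with $\mathcal{J}(u)=\int_{\mathbb{D}}|\nabla u|^2+|K(x)|(e^u-u)$ coercive, which is what makes the abstract results of \cite{BellazziniRuiz23} applicable. Your radial case, by contrast, is in the right spirit: the paper's Pohozaev-type identity shows that $\mathfrak{D}>1$ (equivalently $K(1)+h(1)^2>0$) bounds $e^{u_n(1)}$, and a comparison argument then bounds $u_n$ from above.
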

	
	We will show that under hypotheses (\ref{eq: hypothesis_1}), (\ref{eq: hypothesis_2}) the functional $\mathcal{I}$ is not bounded from below, and it exhibits a mountain-pass geometry. However the Palais-Smale property is not known to hold for the functional $\mathcal{I}$. 	We bypass this problem via the well-known \textit{monotonicity trick} of Struwe, combined with a Morse index bound, as in \cite{LopezSorianoMalchiodiRuiz2019}. In this paper we take profit of the abstract setting given in \cite[Section 4]{BellazziniRuiz23}, also in \cite{BorthwickChangJeanjeanSoave-p}. By doing so, we are able to perturb the problem \eqref{eq: Problem 1} and obtain solutions to such approximating problems with an uniform Morse index bound.
	
	The next step of the proof is to show convergence of the approximating solutions $u_n$, which is equivalent to show uniform boundedness from above. Reasoning by contradiction, we assume that $u_n$ is a blowing-up sequence of solutions. At this point, our proofs for the case $G= \langle g_k \rangle$ and $G=O(2)$ are different.
	
	If $G= \langle g_k \rangle$ we make use of the blow-up analysis peformed in \cite{LopezSorianoMalchiodiRuiz2019}. In sum, there may be two kind of blow-up points: those related to bubbles and those related to $1D$ asymptotic profiles. The second type is excluded by assumption (\ref{eq: hypothesis_3}), and the first type is forbidden thanks to (\ref{eq: hypothesis_symmetry}). However, in order to apply this blow-up analysis, one needs a uniform bound on the Morse index of the approximating solutions. Observe that our variational argument implies a uniform bound only on the $G$-symmetric Morse index. Hence we need to come back to the proof of \cite{LopezSorianoMalchiodiRuiz2019} and adapt it to our $G$-symmetric setting. 
	
	The case $G=O(2)$ is simpler and the blow-up analysis of \cite{LopezSorianoMalchiodiRuiz2019} is not needed. We are able to exclude blow-up in this case by usual ODE techniques. Quite remarkably, the assumption $\mathfrak{D}>1$ appears naturally in this computation.

	Finally, we prove the following result of nonexistence of solutions that shows that assumption \eqref{eq: hypothesis_2} is necessary.
	
	\begin{theorem}\label{thm: nonexistence} 
		Let $K:\mathbb{D}\rightarrow\mathbb{R}$ be a continuous function and a H\"{o}lder continuous function $h:\partial\mathbb{D}\rightarrow\mathbb{R}$. Assume that there exists $c_0>0$ such that $K(x)\leq-c^{2}_{0}$ for all $x\in\mathbb{D}$ and $h(x)\leq c_0$ for all $x\in\partial\mathbb{D}$. Then, the problem \eqref{eq: Problem 1} is not solvable.
	\end{theorem}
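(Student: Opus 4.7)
The plan is to combine the Gauss--Bonnet identity \eqref{eq: Gauss-Bonnet} with the classical Bol--Fiala isoperimetric inequality for simply connected surfaces of non-positive Gaussian curvature, and to reduce non-existence to a quadratic inequality in one real variable whose discriminant turns out to be negative.

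Suppose, for contradiction, that $u$ solves \eqref{eq: Problem 1}, and let $g=e^u g_0$ be the associated conformal metric on $\overline{\mathbb{D}}$. Write
$$A=\int_{\mathbb{D}} e^u\,dx,\qquad L=\int_{\partial \mathbb{D}} e^{u/2}\,ds$$
for the area and perimeter of $(\overline{\mathbb{D}},g)$. The pointwise bounds $K\leq -c_0^2$ and $h\leq c_0$ combined with \eqref{eq: Gauss-Bonnet} yield
$$2\pi \;=\; \int_{\mathbb{D}} K e^u + \int_{\partial \mathbb{D}} h e^{u/2} \;\leq\; -c_0^2\, A + c_0\, L,$$
so that $c_0 L - c_0^2 A \geq 2\pi$.

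Next, $(\overline{\mathbb{D}},g)$ is a simply connected compact Riemannian disk whose Gaussian curvature $K_g = K$ is strictly negative. The Bol--Fiala isoperimetric inequality for simply connected surfaces of non-positive curvature then gives $L^2 \geq 4\pi A$, equivalently $A \leq L^2/(4\pi)$. Substituting this upper bound for $A$ into the inequality of the previous paragraph produces
$$c_0 L - \frac{c_0^2}{4\pi}\, L^2 \;\geq\; 2\pi, \qquad \text{i.e.,}\qquad c_0^2 L^2 - 4\pi c_0 L + 8\pi^2 \;\leq\; 0.$$
The discriminant of this quadratic in $L$ equals $16\pi^2 c_0^2 - 32\pi^2 c_0^2 = -16\pi^2 c_0^2 < 0$; since the leading coefficient $c_0^2$ is positive, the left-hand side is strictly positive for every $L\in \mathbb{R}$, a contradiction that rules out the existence of $u$.

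The main technical point is to ensure that the Bol--Fiala isoperimetric inequality really applies to the conformal disk $(\overline{\mathbb{D}},g)$, which requires $u$ to be smooth enough for $g$ to be a genuine Riemannian metric and $K_g$ to be a genuine Gaussian curvature. Under the stated continuity of $K$ and H\"older continuity of $h$, this follows from standard elliptic regularity for \eqref{eq: Problem 1}: interior regularity is immediate from the Liouville-type interior equation, and up-to-the-boundary smoothness is classical for the nonlinear Robin-type boundary condition.
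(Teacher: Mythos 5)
Your reduction via Gauss--Bonnet is fine: from $K\le -c_0^2$ and $h\le c_0$ you correctly obtain $c_0L-c_0^2A\ge 2\pi$. The fatal step is the substitution of the isoperimetric bound. In the inequality $c_0L-c_0^2A\ge 2\pi$ the area enters with a \emph{negative} coefficient, so to preserve the direction of the inequality you would need a \emph{lower} bound on $A$; the Bol--Fiala inequality $L^2\ge 4\pi A$ gives only the upper bound $A\le L^2/(4\pi)$, which yields $-c_0^2A\ge -\tfrac{c_0^2}{4\pi}L^2$ and hence $c_0L-c_0^2A\ge c_0L-\tfrac{c_0^2}{4\pi}L^2$ --- an inequality pointing the same way as the one you already have, from which nothing follows. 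Your quadratic inequality $c_0^2L^2-4\pi c_0L+8\pi^2\le0$ is therefore not established, and the negative discriminant produces no contradiction.

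This is not a repairable slip: a contradiction cannot be extracted from the global quantities $A$ and $L$ alone. The constraints $c_0L-c_0^2A\ge 2\pi$ and $L^2\ge 4\pi A+c_0^2A^2$ (the sharp Bol--Fiala form for $K\le-c_0^2$) are simultaneously satisfiable, since both are upper bounds on $A$ in terms of $L$ and, e.g., small $A$ with large $L$ is compatible with both. Concretely, a thin tubular neighbourhood of a long geodesic segment in the hyperbolic plane is simply connected, has $K\equiv-1$, and has $L-A$ arbitrarily large, so there is no universal bound $c_0L-c_0^2A<2\pi$ for simply connected domains with $K\le-c_0^2$; what excludes such domains here is the \emph{pointwise} bound $h\le c_0$ on the geodesic curvature, and that information is discarded the moment you estimate $\int_{\partial\D}he^{u/2}\le c_0L$. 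The paper's proof keeps it: after a sub/supersolution reduction to $K\equiv-1$, it uses Liouville's representation to develop $(\D,e^u g_0)$ locally isometrically into the Poincar\'e disk and runs a first-contact comparison with circles $C_r$ of geodesic curvature $\frac{r^2+1}{2r}>1$, contradicting $h\le1$ at the touching point. Some such local comparison argument appears unavoidable.
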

	
	The proof of Theorem \ref{thm: nonexistence} relies on a sub-super solution method, first, to pass to the case $K=-1$. Then we use the classification of Liouville for the solutions of the problem:
	
	$$ - \Delta u = -2 e^u \qquad \mbox{ in } \D.$$
	In this way, we can immerse via a local isometry $(\D, e^u dz)$ as a subdomain of the hyperbolic space $\mathbb{H}$. A typical contact argument with disks in $\mathbb{H}$ concludes the proof.
	
	\medskip 
	The rest of the paper is organized as follows. In Section 2 we use a variational argument to find approximate solutions to the problem. A blow-up analysis of such solutions is performed in Section 3, which implies compactness and concludes the proof of Theorem \ref{thm: existence}. Section 4 is devoted to the proof of Theorem \ref{thm: nonexistence}. In the final Appendix we evaluate the functional $\mathcal{I}$ on suitably defined test functions, showing that this functional is unbounded from below.
	
	\bigskip	
	
	\textbf{Notation:} 	We shall use the symbols $o(1)$, $O(1)$ in a standard way to denote quantities that converge to 0 or are bounded, respectively. Analogously, we will write $o(\rho)$, $O(\rho)$ to denote quantities that, divided by $\rho$, converge to $0$ or are bounded, respectively.
	
	Moreover, throughout this paper we denote $\mathcal{C}^{1,\alpha}_{G}(\D) \subset \mathcal{C}^{1,\alpha}(\D)$ the subspace of $G$-symmetric functions. Analogously, we  denote the Sobolev space of $G$-symmetric functions by $H^1_G(\D)$.

\

\textbf{Acknowledgements:} The authors would like to express their gratitude to the anonymous referee for their valuable suggestions and comments.	
	
	
	\section{Preliminaries and approximate solutions} \label{sec:Existence-result}
	
	In this section we perform the variational study of the energy functional $\mathcal{I}$ defined in (\ref{eq: Energy Functional Disk}). In this regard, a useful tool in the analysis will be the well-known Moser-Trudinger type inequality. Let us recall a weak version for the boundary term on the disk, which is called the Lebedev-Milin inequality (see \cite{OsgoodPhillipsSarnak1988}):
	
	\begin{equation}\label{eq:MT}
		16\pi\log{\int_{\partial\D} e^{u/2}}\leq \int_{\D}|\nabla u|^2+ 4\int_{\partial\D}u.
	\end{equation}
	
	We begin by showing that $\mathcal{I}$ admits a min-max geometry.
	\begin{proposition}\label{prop: Mountain-pass}
		Assume (\ref{eq: hypothesis_1}), (\ref{eq: hypothesis_2}) and \eqref{eq: hypothesis_symmetry}, then there exist $u_0, u_1\in H^1_G(\mathbb{D})$ such that
		\begin{equation*}
			c=\inf_{\gamma\in\Gamma}\max_{t\in[0,1]}\mathcal{I}\left(\gamma(t)\right)>\max\left\{\mathcal{I}(u_0),\mathcal{I}(u_1)\right\},
		\end{equation*}
		where $\Gamma=\left\{\gamma:[0,1]\rightarrow H_G^1(\mathbb{D})\mbox{ continuous } : \gamma(0)=u_0, \gamma(1)=u_1\right\}$.
	\end{proposition}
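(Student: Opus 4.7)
The plan is to realize $\mathcal{I}$ as having classical mountain-pass geometry, with two ``valleys'' reached by two different divergence mechanisms and a uniform barrier between them.

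\emph{Endpoint $u_0$.} Take $u_0\equiv -T$ with $T>0$ large. Constants are $G$-invariant, so $u_0\in H^1_G(\mathbb{D})$, and a direct computation gives
\[
\mathcal{I}(u_0)=2 e^{-T}\!\int_{\mathbb{D}}|K|-4\pi T-4 e^{-T/2}\!\int_{\partial\mathbb{D}} h,
\]
which tends to $-\infty$ as $T\to+\infty$, driven by the linear term $-4\pi T$.

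\emph{Endpoint $u_1$.} By \eqref{eq: hypothesis_2} there is $p\in\partial\mathbb{D}$ with $\mathfrak{D}(p)>1$. Following the construction in the Appendix, one obtains a $G$-symmetric test family $\fms\in H^1_G(\mathbb{D})$ by $G$-symmetrizing a half-bubble profile near $p$; the evaluation of $\mathcal{I}(\fms)$ given there shows, precisely because $\mathfrak{D}(p)>1$, that $\mathcal{I}(\fms)\to-\infty$ along a suitable sequence $(\mu_n,\sigma_n)$ of parameters. Set $u_1:=\fms$ with $\mathcal{I}(u_1)$ as negative as needed.

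\emph{Barrier and main obstacle.} It now suffices to exhibit a constant $M\in\mathbb{R}$, independent of $T,\mu,\sigma$, with $c\ge M$; taking $T$ large and the concentration parameters small then forces $\max\{\mathcal{I}(u_0),\mathcal{I}(u_1)\}<M\le c$. To produce $M$, given $\gamma\in\Gamma$ consider the continuous scalar $\bar u(t):=\frac{1}{2\pi}\int_{\partial\mathbb{D}}\gamma(t)$, which equals $-T$ at $t=0$ and a bounded value at $t=1$; by the intermediate value theorem $\bar u(t_\star)=s_0$ at some $t_\star$, for any prescribed intermediate $s_0$. At $u:=\gamma(t_\star)=s_0+v$ with $\int_{\partial\mathbb{D}}v=0$, I would combine $-2Ke^u\ge 0$ with the Lebedev-Milin inequality \eqref{eq:MT}, crucially sharpened by the $G$-symmetry: for $G=O(2)$ a mean-zero radial function vanishes identically on $\partial\mathbb{D}$, making $\int_{\partial\mathbb{D}}e^{v/2}=2\pi$ bounded automatically, while for $G=\langle g_k\rangle$ the sharp Moser-Trudinger constant improves by the factor $k$. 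The heart of the proof, and its main technical obstacle, is to combine this improvement with the interior positivity $-2Ke^u\ge 0$ (which provides control when concentration occurs in the interior) and a case analysis (interior vs.\ boundary concentration of $v$) to close the estimate; a naive Lebedev-Milin alone yields only $\tfrac12\|\nabla v\|^2 - Ce^{\|\nabla v\|^2/(16\pi)}$, which is unbounded below.
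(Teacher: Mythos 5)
Your endpoints coincide with the paper's: $u_0$ a large negative constant and $u_1$ a concentrating test function from the Appendix (this is exactly Lemma~\ref{lem: bubble}), and both computations are correct. The genuine gap is the barrier, which you yourself flag as unresolved, and the slicing you chose does not in general produce one. You apply the intermediate value theorem to the boundary average $\bar u(t)=\frac{1}{2\pi}\int_{\partial\mathbb{D}}\gamma(t)$ and then try to bound $\mathcal{I}$ from below on a slice $\{\bar u=s_0\}$. In the case $G=\langle g_k\rangle$ this cannot work as stated: the test functions $\tilde{\varPhi}_{\mu,k}$ of the Appendix satisfy $\int_{\partial\mathbb{D}}\tilde{\varPhi}_{\mu,k}=O(1)$ (estimate \eqref{eq: LogofPhis}) while $\mathcal{I}(\tilde{\varPhi}_{\mu,k})\to-\infty$, so after an $O(1)$ vertical shift this family sits on a fixed slice of the boundary average with arbitrarily negative energy; such slices are therefore not walls. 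Your proposed repair via a $k$-improved Moser--Trudinger constant cannot close this either, since $\tfrac12\|\nabla v\|^2-Ce^{\|\nabla v\|^2/(16\pi k)}$ is still unbounded below for every $k$; and the observation that mean-zero symmetric functions are trivial on $\partial\mathbb{D}$ is available only for $G=O(2)$, i.e.\ precisely the case where your slicing already works and is not needed for the general theorem.

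The paper's proof avoids all of this by choosing the wall to be a level set of the \emph{nonlinear} boundary quantity: $M_\delta=\{u:\int_{\partial\mathbb{D}}e^{u/2}=\delta\}$. On $M_\delta$ the troublesome term is not estimated through any exponential inequality but is simply $-4\int_{\partial\mathbb{D}}he^{u/2}\ge-4\delta\mathcal{H}$ with $\mathcal{H}=\max_{\partial\mathbb{D}}h$, while $-2Ke^u\ge0$ by \eqref{eq: hypothesis_1} and the remaining quadratic-plus-linear part is bounded below by $8\pi\log\delta$ via Lebedev--Milin \eqref{eq:MT}; hence $\inf_{M_\delta}\mathcal{I}\ge 8\pi\log\delta-4\delta\mathcal{H}$ (Lemma~\ref{lem: legofmontain}). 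Choosing $u_0,u_1$ with $\int_{\partial\mathbb{D}}e^{u_0/2}<\delta<\int_{\partial\mathbb{D}}e^{u_1/2}$ and energies below $\inf_{M_\delta}\mathcal{I}$, every path in $\Gamma$ crosses $M_\delta$ by continuity of $t\mapsto\int_{\partial\mathbb{D}}e^{\gamma(t)/2}$, and the proposition follows. In short: replace your slicing functional $\bar u$ by $\int_{\partial\mathbb{D}}e^{u/2}$; with that single change your argument closes in two lines and no improved Moser--Trudinger inequality or interior/boundary case analysis is required.
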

	
	The proof of Proposition \ref{prop: Mountain-pass} requires two preliminary results:
	
	\begin{lemma}\label{lem: legofmontain}
		Assume that (\ref{eq: hypothesis_1}) is satisfied, and take $\delta>0$. Then $\mathcal{I}|_{M_\delta}$ is bounded from below, where:
		
		\begin{equation*}
			M_{\delta}:=\left\{u\in H^1(\mathbb{D}):\int_{\partial\mathbb{D}}e^{u/2} = \delta \right\}.
		\end{equation*}
		
	\end{lemma}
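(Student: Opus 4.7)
The plan is very direct and uses each hypothesis economically. Since (\ref{eq: hypothesis_1}) gives $K \leq 0$ on $\mathbb{D}$, the interior exponential term has a favorable sign, namely $-2\int_{\mathbb{D}} K(x) e^u \geq 0$, and can simply be dropped from a lower bound of $\mathcal{I}(u)$. It therefore suffices to bound from below the quantity
\begin{equation*}
\mathcal{J}(u) := \frac{1}{2}\int_{\mathbb{D}}|\nabla u|^2 + 2\int_{\partial\mathbb{D}} u - 4\int_{\partial\mathbb{D}} h(x) e^{u/2}
\end{equation*}
on $M_\delta$.

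For the boundary nonlinear term, since $h \in \mathcal{C}^1(\partial\mathbb{D})$ is bounded and $u \in M_\delta$ satisfies $\int_{\partial\mathbb{D}} e^{u/2} = \delta$, I would estimate
\begin{equation*}
-4\int_{\partial\mathbb{D}} h(x) e^{u/2} \geq -4\|h\|_{\infty}\,\delta,
\end{equation*}
which is a finite constant depending only on $\delta$ and $h$.

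The heart of the argument is to handle the remaining linear/quadratic part $\tfrac{1}{2}\int|\nabla u|^2 + 2\int_{\partial\mathbb{D}} u$. Here I would apply the Lebedev-Milin inequality (\ref{eq:MT}) to rewrite the boundary integral of $u$ in terms of the Dirichlet energy: using $\int_{\partial\mathbb{D}} e^{u/2} = \delta$,
\begin{equation*}
4\int_{\partial\mathbb{D}} u \;\geq\; 16\pi \log\delta - \int_{\mathbb{D}}|\nabla u|^2,
\end{equation*}
so after dividing by $2$ and adding $\tfrac{1}{2}\int_{\mathbb{D}}|\nabla u|^2$, the Dirichlet term cancels exactly and one obtains
\begin{equation*}
\frac{1}{2}\int_{\mathbb{D}}|\nabla u|^2 + 2\int_{\partial\mathbb{D}} u \;\geq\; 8\pi \log\delta.
\end{equation*}
Combining the three estimates yields $\mathcal{I}(u) \geq 8\pi \log\delta - 4\|h\|_\infty \delta$ for every $u \in M_\delta$, which is the desired lower bound.

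I do not anticipate a real obstacle here: the key observation is simply that (\ref{eq:MT}) is tailor-made to cancel the Dirichlet energy against the boundary linear term, and the sign condition on $K$ disposes of the only other potentially unbounded contribution. The symmetry assumption (\ref{eq: hypothesis_symmetry}) and hypothesis (\ref{eq: hypothesis_2}) are not needed for this lemma; they will enter only when one connects $M_\delta$ to the rest of the mountain-pass geometry in Proposition \ref{prop: Mountain-pass}.
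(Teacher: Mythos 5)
Your proof is correct and is essentially the same as the paper's: both drop the interior term using $K\leq 0$, bound the boundary nonlinear term by a constant via $\int_{\partial\mathbb{D}}e^{u/2}=\delta$, and use the Lebedev--Milin inequality \eqref{eq:MT} to cancel the Dirichlet energy against the boundary linear term, yielding $\mathcal{I}(u)\geq 8\pi\log\delta - C\delta$. Your closing remark that \eqref{eq: hypothesis_2} and \eqref{eq: hypothesis_symmetry} are not needed here is also consistent with the paper's statement of the lemma.
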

	\begin{proof}
		
		
		The proof is a direct consequence of the inequality \eqref{eq:MT}, since:
		
		\begin{equation*}
			\begin{split}
				\mathcal{I}(u)\geq&8\pi\log{\int_{\partial\mathbb{D}}e^{u/2}}-\int_{\partial\mathbb{D}}4h(x)e^{u/2}\\
				\geq&8\pi\log{\delta}-4\delta\mathcal{H},
			\end{split}
		\end{equation*}
		where $\displaystyle{\mathcal{H}=\max_{\partial\mathbb{D}}{h(x)}}$.
	\end{proof}
	
	\begin{lemma}\label{lem: bubble}
		If (\ref{eq: hypothesis_1}), (\ref{eq: hypothesis_2}) and \eqref{eq: hypothesis_symmetry} are satisfied, then  there exists a sequence $u_n\in H^{1}_G(\mathbb{D})$ such that 
		\begin{equation*}\label{eq:asymptotic}
			\mathcal{I}(u_n)\rightarrow-\infty\text{ and }\int_{\partial\mathbb{D}}e^{u_n/2}\rightarrow+\infty.
		\end{equation*}
	\end{lemma}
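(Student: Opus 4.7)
The plan is to construct an explicit $G$-symmetric family $\{u_\mu\}_{\mu\to 0^+}$ along which both $\mathcal{I}(u_\mu)\to-\infty$ and $\int_{\partial \D}e^{u_\mu/2}\to+\infty$. Fix $p\in\partial \D$ with $\mathfrak{D}(p)>1$, provided by \eqref{eq: hypothesis_2}.

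In the case $G=O(2)$, both $K|_{\partial \D}$ and $h$ are constant by symmetry, and I would use the (automatically $G$-symmetric) radial profile
\[
u_\mu(x)=-2\log\bigl(\mu+a(1-|x|)\bigr),
\]
with a parameter $a>0$ to be optimized. Elementary 1D computations in polar coordinates of the four pieces of $\mathcal{I}$ — the gradient integral, the interior term weighted by $K$, the linear boundary term, and the boundary term weighted by $h$ — yield, as $\mu\to 0^+$,
\[
\mathcal{I}(u_\mu)=\frac{4\pi}{\mu}\Bigl(a+\tfrac{|K(p)|}{a}-2h(p)\Bigr)+O\bigl(\log\tfrac{1}{\mu}\bigr),\qquad \int_{\partial \D}e^{u_\mu/2}=\tfrac{2\pi}{\mu}.
\]
By AM-GM the bracket is minimized at $a=\sqrt{|K(p)|}$ with value $2\bigl(\sqrt{|K(p)|}-h(p)\bigr)$, which is strictly negative precisely when $\mathfrak{D}(p)>1$. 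This delivers both conclusions in one shot.

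In the case $G=\langle g_k\rangle$, the radial ansatz above only involves the angular averages of $K$ and $h$, which need not be controlled by the pointwise condition $\mathfrak{D}(p)>1$. I would instead symmetrize a boundary bubble over the orbit $p_j=g_k^j p$, $j=0,\dots,k-1$, and take
\[
u_\mu(x)=-2\sum_{j=0}^{k-1}\log\bigl(\mu+a(1-x\cdot p_j)\bigr).
\]
Each summand is a ``parabolic'' boundary bubble concentrating at $p_j$ in a window of width $\sqrt{\mu/a}$ tangentially and $\mu/a$ normally; the change of variables $(1-|x|,\theta-\theta_{p_j})=\sqrt{\mu/a}\,(\rho,\phi)$ makes each piece of $\mathcal{I}$ acquire a prefactor $\mu^{-1/2}$. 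Using the $G$-symmetry of $K$ and $h$ to identify their values at every orbit point with $K(p)$ and $h(p)$, and collecting the bounded cross-interactions in a constant $C(a)$, one arrives at
\[
\mathcal{I}(u_\mu)\sim 2\pi k\sqrt{\tfrac{2}{a\mu}}\Bigl(a+|K(p)|\tfrac{e^{C(a)}}{a}-2h(p)\,e^{C(a)/2}\Bigr),\quad \int_{\partial \D}e^{u_\mu/2}\sim k\pi\,e^{C(a)/2}\sqrt{\tfrac{2}{a\mu}}\to\infty.
\]
A direct optimization in $a$ shows that the bracket attains a strictly negative minimum precisely when $\mathfrak{D}(p)>1$, giving the desired conclusion.

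The main obstacle is the asymptotic bookkeeping behind these formulas: one has to verify that the cross-gradient terms $\int\nabla\varphi_i\cdot\nabla\varphi_j$ for $i\neq j$, the boundary integral $\int_{\partial \D}u_\mu$, and the interaction factor $C(a)$ are all bounded (or at worst $O(\log\tfrac{1}{\mu})$) and therefore do not interfere with the leading $\mu^{-1/2}$ or $\mu^{-1}$ behavior. Once these estimates are in place, the exploitation of $\mathfrak{D}(p)>1$ through AM-GM is immediate. These integral computations are precisely what the Appendix is devoted to.
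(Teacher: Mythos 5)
Your proposal is correct, and it follows the paper's overall strategy — explicit $G$-symmetric test functions concentrating on the orbit of a point $p$ with $\mathfrak{D}(p)>1$, the four pieces of $\mathcal{I}$ estimated separately, the final sign decided by $\mathfrak{D}(p)>1$ — but with genuinely different test functions. The paper's Appendix uses profiles modeled on the extremal solutions of the limiting Liouville problem: in the radial case $\varphi_\mu=2\log\frac{2\mu}{\mu^2-|x|^2}$ with $\mu\searrow1$, and for $G=\langle g_k\rangle$ the function $\log\sum_i e^{\varphi_{\mu,q_i}}$ built from bubbles centered at points $q_i$ placed \emph{outside} the disk, with $\mu r\searrow 1$; the superposition is additive at the level of $e^u$, there is no free parameter to optimize, and $\mathfrak{D}>1$ enters directly as the sign of a fixed bracket (e.g.\ $\frac{16\pi}{\mu^2-1}(1-\tilde{\mathfrak{D}})$ in the radial case). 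You instead use elementary parabolic profiles with a free scale $a$, superposed additively at the level of $u$ (hence multiplicatively at the level of $e^u$), and recover the condition $\mathfrak{D}(p)>1$ through AM--GM; your integrals are easier to evaluate in closed form, while the paper's choice avoids both the optimization and the interaction constant. Two points in your sketch need to be nailed down. First, in the $\langle g_k\rangle$ case the interaction constant $e^{C(a)}=\prod_{j\neq 0}\bigl(a(1-p\cdot p_j)\bigr)^{-2}$ depends on $a$, so the minimization of the bracket $a+|K(p)|e^{C(a)}/a-2h(p)\,e^{C(a)/2}$ is coupled; setting $b(a)=e^{C(a)/2}=\mathrm{const}\cdot a^{-(k-1)}$, the AM--GM equality condition $a=b(a)\sqrt{|K(p)|}$ has a unique positive solution, at which the bracket equals $2b\bigl(\sqrt{|K(p)|}-h(p)\bigr)<0$, so the claim survives, but ``a direct optimization in $a$'' conceals this fixed-point step and should be written out. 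Second, the $O(1)$ (or $O(\log(1/\mu))$) bounds on the cross-gradient terms, on $\int_{\partial\mathbb{D}}u_\mu$, and on the error from replacing $K$ and $h$ by their values at the concentration points are precisely the content of the lemma — they do hold for your profiles (the cross terms have at worst integrable logarithmic singularities, and $(H_1)$ gives $C^1$ control of $K$ near $\partial\mathbb{D}$), and they are the analogues of the estimates the paper carries out in full for its own bubbles.
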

	
	\begin{proof}
		The proof of this result is postponed to the Appendix~\ref{sec: appendix}.
	\end{proof}
	
	\begin{proof}[\underline{Proof of Proposition \ref{prop: Mountain-pass}}]
		Let us evaluate the functional on constant functions $-n$, with $n \in \N$:
		
		\begin{equation*}
			\mathcal{I}(-n)=-\int_{\mathbb{D}}2K(x)e^{-n}-\int_{\partial\mathbb{D}}2 n-\int_{\partial\mathbb{D}}4h(x)e^{-n/2} \to - \infty \mbox{ as } n \to +\infty.
		\end{equation*}
		Fix $\delta >0$ and take $u_0=-n$ such that:
		$$ \int_{\partial \D} e^{u_0/2} < \delta \quad \mbox{ and }\quad \mathcal{I}(u_0) < \inf_{M_\delta} \, \mathcal{I}.$$
		
		By Lemma \ref{lem: bubble}, there exists also $u_1$ such that: $$\int_{\partial\mathbb{D}} e^{u_1/2}>\delta  \quad \mbox{ and } \quad \mathcal{I}(u_1)<\inf_{M_\delta} \, \mathcal{I}.$$ Observe that for any $\gamma\in\Gamma$ there exists $t\in(0,1)$ such that $\int_{\partial\mathbb{D}}e^{\gamma(t)/2}=\delta$. As a consequence, 
		
		\begin{equation*}
			c:=\inf_{\gamma\in\Gamma}\max_{t\in[0,1]}\mathcal{I}\left(\gamma(t)\right)\geq \inf_{M_\delta} \, \mathcal{I} >\max\{\mathcal{I}(u_0),\mathcal{I}(u_1)\}.
		\end{equation*}
		
	\end{proof}
	
	The main difficulty to prove Theorem \ref{thm: existence} is the fact that we do not know whether the Palais-Smale sequences are bounded in this framework. We shall use the so-called monotonicity trick (see \cite{Struwe1985, Jeanjean1999}), combined with Morse index bounds, to bypass this difficulty. This is an approach previously followed in \cite{LopezSorianoMalchiodiRuiz2019} in the search of conformal metrics with prescribed curvatures, and also in \cite{BellazziniRuiz23, BorthwickChangJeanjeanSoave2023} in other contexts. An abstract setting of the method has been given in \cite[Section 4]{BellazziniRuiz23}, see also \cite{BorthwickChangJeanjeanSoave-p} for a version applicable to constrained functionals. 
	
	The goal is to find solutions of approximate problems:

	\begin{equation}
		\left\{
		\begin{array}{ll}
			-\Delta u_n+2\tilde{K}_n(x)=2K_n(x)e^{u_n} &\text{in }\mathbb{D},\\
			\\
			\frac{\partial u_n}{\partial \nu}+2\tilde{h}_n(x)=2h_n(x)e^{u_n/2} &\text{on }\partial\mathbb{D}.\\
		\end{array}\right.
		\label{eq: perturbed problem 1}
	\end{equation}
	
	These are critical points of the perturbed functional $\mathcal{I}_n: H^1_G(\D) \to \R$, 
	
	$$\mathcal{I}_n(u) = \int_{\mathbb{D}}\left(\frac{1}{2}|\nabla u|^2+2\tilde{K}_n(x)u-2K_n(x)e^{u}\right)+\int_{\partial\mathbb{D}}\left(2\tilde{h}_n(x)u-4h_n(x)e^{u/2}\right).$$
	
	As usual, we define the Morse index of the solutions $u_n$ as:
	
	\begin{equation*} \label{morse} ind_G(u_n):= \max \{\dim \ E; E \subset H^1_G(\D) \mbox{ vector space}: \mathcal{I}_n''(u_n)|_{E\times E} \mbox{ is negative definite} \}. \end{equation*}

	Observe that this is nothing but the number of negative eigenvalues of $\mathcal{I}_n''(u_n)$, counted with multiplicity, considering only $G$-symmetric eigenfunctions.
	
	For later use, we define the quadratic form:
	
	\begin{equation}\label{eq:quadraticform}
		Q_n(\psi):=\mathcal{I}_n''(u_n)(\psi,\psi)=\int_{\mathbb{D}}\left(|\nabla\psi|^{2}-2\psi^{2}K_n(x)e^{u_n}\right)-\int_{\partial\mathbb{D}}\psi^{2}h_n(x)e^{u_n/2}.
	\end{equation}

	The main result of this section is the following:
	
	\begin{proposition}\label{prop:ConvergenceSubsequence}
		Under the assumptions of Theorem \ref{thm: existence}, there exists a sequence of solutions $u_n$ to problem (\ref{eq: perturbed problem 1}), where $\tilde{K}_n(x)\rightarrow 0$, $\tilde{h}_n(x)\rightarrow 1$, $K_n(x)\rightarrow K(x) $, $h_n(x)\rightarrow h(x)$ in the $\mathcal{C}^1$ sense. Moreover,
		\begin{equation}\label{morse<1}
			ind_G(u_n) \leq 1.
		\end{equation} 		
	\end{proposition}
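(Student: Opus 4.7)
The strategy is to apply the monotonicity trick of Struwe in the abstract form of \cite[Section 4]{BellazziniRuiz23}, which delivers not only critical points of a perturbation of the functional but also a Morse index bound inherited from the min-max structure. The main building blocks are already in place: Proposition~\ref{prop: Mountain-pass} provides the mountain-pass geometry in $H^1_G(\mathbb{D})$, and Lemmas~\ref{lem: legofmontain}--\ref{lem: bubble} give the quantitative estimates at the two endpoints.

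The first step is to embed $\mathcal{I}$ into a one-parameter family $\{\mathcal{I}_\lambda\}_{\lambda\in J}$ of $\mathcal{C}^2$ functionals on $H^1_G(\mathbb{D})$, with $J\subset\mathbb{R}$ an open interval containing $1$ and $\mathcal{I}_1=\mathcal{I}$. A convenient choice is to perturb by a multiplicative factor on one of the competing terms, e.g.
$$\mathcal{I}_\lambda(u)=\int_{\mathbb{D}}\left(\tfrac{1}{2}|\nabla u|^{2}-2K(x)e^{u}\right)+\int_{\partial\mathbb{D}}\left(2\lambda u-4h(x)e^{u/2}\right),$$
or a comparable variant in which the $\lambda$-dependence is monotone. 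Using Lemmas~\ref{lem: legofmontain} and \ref{lem: bubble}, I would check that the mountain-pass geometry of Proposition~\ref{prop: Mountain-pass} persists uniformly for $\lambda$ close to $1$: the same endpoints $u_0,u_1$ (after a small rescaling) separate the barrier $\inf_{M_\delta}\mathcal{I}_\lambda$, so a min-max level $c(\lambda)$ is well defined and $\lambda\mapsto c(\lambda)$ is monotone on $J$.

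At this point the abstract theorem in \cite[Section 4]{BellazziniRuiz23} applies: for almost every $\lambda\in J$ monotonicity of $c(\lambda)$ produces a \emph{bounded} Palais--Smale sequence $(v_k)\subset H^1_G(\mathbb{D})$ at level $c(\lambda)$, and moreover one obtains an augmented sequence whose $G$-symmetric Morse index is asymptotically at most $1$, which is the expected index of a mountain-pass critical point. For any such $\lambda$, I would upgrade boundedness to strong convergence: the $H^{1}$ bound together with \eqref{eq:MT} gives $L^{p}$ control on $e^{v_k}$ and $e^{v_k/2}$ for every $p$, so that the nonlinear terms pass to the limit; the sign condition $K\leq 0$ prevents interior loss of mass, while the boundary mass is controlled by Lebedev--Milin. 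This yields a critical point $u_\lambda$ of $\mathcal{I}_\lambda$ with $ind_G(u_\lambda)\leq 1$ by lower semicontinuity of the Morse index under $H^{1}$ convergence of smooth critical points.

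Finally, choosing a sequence $\lambda_{n}\to 1$ from the full-measure set of admissible parameters and setting $u_n:=u_{\lambda_n}$ yields a sequence of solutions to a problem of the form \eqref{eq: perturbed problem 1} with $\tilde{K}_n, \tilde{h}_n, K_n, h_n$ approaching $0,1,K,h$ in the $\mathcal{C}^1$ sense, as dictated by the chosen parameterization. The main obstacle I anticipate is the compactness step: bounded Palais--Smale sequences for $\mathcal{I}_\lambda$ are not automatically relatively compact, since concentration at the boundary is a priori possible. The key point is to combine $K\leq 0$ with the Lebedev--Milin inequality \eqref{eq:MT} to force both $\int_{\mathbb{D}}|K|e^{v_k}$ and $\int_{\partial\mathbb{D}}e^{v_k/2}$ to remain bounded for bounded PS sequences; once this is established, standard elliptic regularity converts weak convergence to strong, and the Morse index bound \eqref{morse<1} follows from the abstract framework essentially for free.
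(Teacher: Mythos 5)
Your overall architecture matches the paper's: embed $\mathcal{I}$ into a one-parameter family, invoke the abstract monotonicity-trick framework of \cite[Section 4]{BellazziniRuiz23} to obtain bounded Palais--Smale sequences with an (almost) Morse index bound for a.e.\ parameter, pass to the limit by compactness, and let the parameter tend to its unperturbed value. The gap lies in the choice of the perturbation, and it is not cosmetic. The family you propose, $\mathcal{I}_\lambda(u)=\mathcal{I}(u)+2(\lambda-1)\int_{\partial\mathbb{D}}u$, does not satisfy the hypotheses of the abstract result: the map $\lambda\mapsto\mathcal{I}_\lambda(u)$ is not monotone for fixed $u$ (the quantity $\int_{\partial\mathbb{D}}u$ has no sign), so the monotonicity of $c(\lambda)$ that drives the whole argument is not available. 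More importantly, even granting monotonicity, the abstract framework produces a \emph{bounded} Palais--Smale sequence only because the a.e.\ differentiability of the min-max level controls the value of the perturbing functional along the sequence, and this converts into an $H^1$ bound only when that functional is coercive. Controlling $\int_{\partial\mathbb{D}}v_k$ (or a multiple of one of the other terms) does not bound $\|v_k\|_{H^1}$: the dangerous direction is $u\equiv-n$, along which $\mathcal{I}$ and your $\mathcal{I}_\lambda$ tend to $-\infty$ while $\|u\|_{H^1}\to+\infty$, and nothing in your setup controls the mean of $u$ along the Palais--Smale sequence. Boundedness is therefore genuinely at stake and is not recovered afterwards by Lebedev--Milin and $K\leq0$, which only help once an $H^1$ bound is in hand.

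The paper's proof hinges precisely on the construction you are missing: it perturbs additively, $\mathcal{I}_\varepsilon=\mathcal{I}+\varepsilon\mathcal{J}$ with
$\mathcal{J}(u)=\int_{\mathbb{D}}|\nabla u|^2+|K(x)|\left(e^u-u\right)$,
and proves that $\mathcal{J}$ is positive and coercive on $H^1(\mathbb{D})$ by splitting $u$ into its mean and zero-mean parts, using $e^{s}-s\geq|s|$ to control the mean (here $K<0$ on $\partial\mathbb{D}$ guarantees $\int_{\mathbb{D}}|K|>0$) and Poincar\'e for the remainder. This is exactly what rules out escape along constants and allows \cite[Proposition 4.2]{BellazziniRuiz23} to deliver bounded Palais--Smale sequences together with the index bound \eqref{aqui}; the Euler--Lagrange equation of $\mathcal{I}+\varepsilon_n\mathcal{J}$ then yields the explicit coefficients \eqref{aprox} of the perturbed problem \eqref{eq: perturbed problem 1}. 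Your remaining steps (compactness of bounded Palais--Smale sequences via Moser--Trudinger type embeddings, persistence of the index bound in the limit, and $\varepsilon_n\to0$) are in line with the paper, but without a coercive perturbation the argument does not get off the ground.
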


	\begin{proof}
		
		Let us consider the following family of functionals
		
		\begin{equation*}
			\mathcal{I}_{\varepsilon}(u)=\mathcal{I}(u)+\varepsilon \mathcal{J}(u),
		\end{equation*}
		where $\mathcal{J}:H^1(\mathbb{D})\to\mathbb{R}$ is defined as
		
		\begin{equation*}
			\mathcal{J}(u)=\int_{\mathbb{D}}|\nabla u|^2+|K(x)|\left(e^u-u\right).
		\end{equation*}

  First we claim that $\mathcal{J}$ is coercive. To prove this, consider \( u_n \in H^1(\mathbb{D}) \), with $\|u_n\|_{H^1(\D)} \to +\infty$, and we aim to prove that $\mathcal{J}(u_n) \to +\infty$. We decompose \( u_n = \bar{u}_n + \tilde{u}_n \), where
\[
\bar{u}_n = \frac{1}{|\mathbb{D}|} \int_{\mathbb{D}} u_n, \quad \text{and} \quad \int_{\mathbb{D}} \tilde{u}_n  = 0.
\]

Observe that $\|u_n\|_{H^1(\mathbb{D})}^2 =\|\nabla \tilde{u}_n\|_{L^2(\mathbb{D})}^2 + \| u_n\|_{L^2(\mathbb{D})}^2 \to +\infty$. If \( \|\nabla \tilde{u}_n\|_{L^2(\mathbb{D})} \to +\infty \), we use the inequality $e^{s}-s\geq 0$ to conclude that \[
\mathcal{J}(u_n) \to +\infty \quad \text{as} \quad  n \to +\infty.
\]
Assume now that \( \|\nabla \tilde{u}_n\|_{L^2(\mathbb{D})}\leq C\), which implies necessarily \(\|u_n\|_{L^2(\mathbb{D})} \to +\infty \). Since  \(\|u_n\|_{L^2(\mathbb{D})}\leq ||\overline{u}_n||_{L^2(\mathbb{D})} +||\tilde{u}_n||_{L^2(\mathbb{D})}\) and $||\tilde{u}_n||_{L^2(\mathbb{D})}$ is bounded by the Poincar\'{e} inequality, then \( |\bar{u}_n | \to +\infty \). Furthermore, using the inequality \( e^{s} - s \geq |s| \), we have:
\begin{equation*}
        \mathcal{J}(u_n)\geq \int_{\mathbb{D}}|K(x)|\, |u_n| = \int_{\mathbb{D}}|K(x)|\,|\bar{u}_n + \tilde{u}_n|\geq\int_{\mathbb{D}}|K(x)\bar{u}_n| -|K(x) \tilde{u}_n|.
    \end{equation*}
    Finally, by applying H\"{o}lder and Poincar\'{e} inequalities, we obtain 
    \begin{equation*}
        \mathcal{J}(u_n)\geq |\bar{u}_n|\int_{\mathbb{D}}|K(x)| -C\left(\int_{\mathbb{D}}K(x)^2\right)^{1/2}\to+\infty\,\text{ as } n\to+\infty.
    \end{equation*}
    Then, the functional  \( \mathcal{J}\) is coercive as claimed.

		Now, fixing a sufficiently small $\varepsilon_0>0$ and consider $\varepsilon\in(0, \varepsilon_0)$, we have that $\mathcal{I}_\varepsilon$ shares the same min-max geometry of $\mathcal{I}$, i.e.,
		
		\begin{equation*}
			c_{\varepsilon}:=\inf_{\gamma\in\Gamma}\max_{t\in[0,1]}\mathcal{I}_{\varepsilon}\left(\gamma(t)\right)>\max\left\{\mathcal{I}_{\varepsilon}(u_0), \mathcal{I}_{\varepsilon}(u_1)\right\},
		\end{equation*}
		where $\Gamma$ stands for the class of admissible curves,
		
		\begin{equation*}
			\Gamma=\left\{\gamma:[0,1]\to H^1(\mathbb{D}) \mbox{ continuous }: \gamma(0)=u_0, \gamma(1)=u_1\right\}.
		\end{equation*}
		
		Observe that $\mathcal{J}(u)$ is strictly positive and coercive, and that $\mathcal{I}'_\e$, $\mathcal{I}''_\e$ are uniformly H\"{o}lder continuous in bounded sets. We can now apply \cite[Proposition 4.2]{BellazziniRuiz23} or \cite[Theorem 1]{BorthwickChangJeanjeanSoave-p} to conclude the existence of a bounded Palais-Smale sequence at level $c_\e$, $\{v_k^{\e}\}$, for all $\e \in D \subset (0, \e_0)$ a set of full measure. Moreover,
		
		\begin{equation} \label{aqui} \max \{ dim\, Y: \ Y \subset H^1_G(\D) \mbox{ vector space: }\  \mathcal{I}_\e''(v_k^\e) (\phi, \phi) \leq \delta_k \| \phi\|^2 \, \, \forall \phi \in Y \} \leq 1, \end{equation}
		for some sequence $\delta_k \to 0$. 
		
		Up to a subsequence we can assume that $v_k^\e \weakto v_\e$ in $H^1_G(\D)$. By compactness we conclude that $v_\e$ is a critical point of $\mathcal{I}_\e$.  
		
		Take now $\e_n \in D$, $\e_n \to 0$, and $u_n= v_{\e_n}$. Clearly, $u_n$ is a solution of \eqref{eq: perturbed problem 1} with:

		\begin{equation}\label{aprox}
			\tilde{K}_n=\frac{|K(x)|\varepsilon_n}{1+2\varepsilon_n}; \quad K_n=\frac{|K(x)|\left(1+\varepsilon_n/2\right)}{1+2\varepsilon_n}; \quad \tilde{h}_n=\frac{1}{1+2\varepsilon_n}; \quad  h_n= \frac{h(x)}{1+2\varepsilon_n}.
		\end{equation}

		Moreover we have the Morse index bound \eqref{morse<1} as a consequence of \eqref{aqui}.
	\end{proof}
	
	\section{Blow-up analysis and proof of Theorem \ref{thm: existence}} \label{sec:Blow-upAnalysis}
	The results obtained in Section \ref{sec:Existence-result} motivate the study of blow-up solutions and the conditions for which such sequences are bounded from above. In this regard, we give the following general result, which is closely related to \cite[Theorem 1.4]{LopezSorianoMalchiodiRuiz2019}, see Remark \ref{rem: S0empty} below. 	
	
	\begin{theorem}\label{thm: Blowup}
		Let $u_n$ be a blowing-up sequence  (namely, $ \sup_\D  u_n  \to +\infty$) of solutions to 
		\begin{equation}\label{ecua-compact}
			\begin{cases}
				- \Delta u_n + 2 \tilde{K}_n(x) = 2 K_n(x) e^{u_n} & \hbox{ in } \D, \\
				\frac{\partial u_n}{\partial \nu} + 2 \tilde{h}_n(x) = 2 h_n(x) e^{u_n/2} & \hbox{ on } \partial \D, 
			\end{cases}
		\end{equation}
where $\tilde{K}_n$, $\tilde{h}_n$, $K_n$, $h_n$ are $C^1$ functions with $\tilde{K}_n \to \tilde{K}$, $K_n \to K$, $\tilde{h}_n \to \tilde{h}$ and $h_n \to h$ in the $C^1$. We assume that $K_n\leq 0$ in $\mathbb{D}$ and $K<0$ on $\partial \mathbb{D}$.  Define the singular set S as
		\begin{equation*}
			S=\left\{p\in\mathbb{D}: \exists \, x_{n}\rightarrow p\text{ such that }u_{n}(x_{n})\rightarrow+\infty\right\},
		\end{equation*}
		and 
		\begin{equation*}
			\chi_n=	\int_{\mathbb{D}}\tilde{K}_n+\int_{\partial\mathbb{D}}\tilde{h}_{n}.
		\end{equation*}
		
		Then the following assertions hold true:
		
		\begin{enumerate}
			\item $S\subset\{p\in\partial\mathbb{D}:\mathfrak{D}(p)\geq1\}$.
			\item If $\int_{\mathbb{D}}e^{u_n}$ is bounded, then
			
			\begin{equation*}
				S:=\{p_1, \dots p_j\}\subset\{p\in\partial\mathbb{D}:\mathfrak{D}(p)>1\}.
			\end{equation*}
			
			In this case $|K_n|e^{u_n} \weakto \sum_{i=1}^j \b_i \delta_{p_i}$, $h_ne^{u_n/2} \weakto \sum_{i=1}^j (\b_i+2\pi) \delta_{p_i}$ for some suitable $\b_i > 0$. 
			In particular,
			$\ \chi_n \to 2 \pi j$. 
			
			\item If $\int_{\mathbb{D}}e^{u_n}$ is unbounded, there exists a unit positive measure $\sigma$ on $\mathbb{D}$ such that:
			
			\begin{enumerate}
				\item $\displaystyle\frac{|K_{n}|e^{u_n}}{\int_{\mathbb{D}}|K_{n}|e^{u_n}}\rightharpoonup\sigma$, $\displaystyle\frac{h_{n}e^{u_{n}/2}}{\int_{\partial\mathbb{D}}h_{n}e^{u_n/2}}\rightharpoonup\sigma|_{\partial\mathbb{D}}$;
				\item supp $\sigma\subset\{p\in\partial\mathbb{D}:\mathfrak{D}\geq1$, $\mathfrak{D}_\tau(p)=0\}$.
			\end{enumerate}
			
			\item Assume now that $u_n$ are $G$-symmetric functions with $G= \langle g_k \rangle$, and $ind_G(u_n)\leq m$ for some $m$ and all $n \in \N$. If $\int_{\mathbb{D}}e^{u_n}$ is unbounded, then $S=S_0\cup S_1$, where
			
			\begin{equation*}
				S_0\subset\{p\in\partial\mathbb{D}:\mathfrak{D}(p)=1, \mathfrak{D}_\tau(p)=0\} \neq \emptyset,
			\end{equation*}
			\begin{equation*}
				S_1=\{p_1,p_2,\ldots,p_j\}\subset\{p\in\partial\mathbb{D}:\mathfrak{D}(p)>1\}, j\leq \, k \, m.
			\end{equation*}
			
		\end{enumerate}
		
	\end{theorem}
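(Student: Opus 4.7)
The plan is to adapt the blow-up analysis of \cite{LopezSorianoMalchiodiRuiz2019} to the perturbed system \eqref{ecua-compact}. The perturbations $\tilde{K}_n, \tilde{h}_n$ are harmless because they are uniformly $C^1$ bounded and are absorbed as lower-order terms in any blow-up rescaling, where $u_n\to+\infty$ forces $e^{u_n}$ to dominate. Parts (1)--(3) then parallel the corresponding statements in that reference, so I would sketch them briefly and spend the bulk of the argument on part (4), which contains the genuinely new content.

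For part (1), at a candidate blow-up point $p$ I rescale $v_n(y) = u_n(x_n + \lambda_n y) + 2\log \lambda_n$ with $\lambda_n$ chosen so $v_n(0)=0$. An interior blow-up would produce an entire solution of $-\Delta U = 2K(p)e^U$ on $\R^2$ with $K(p)\leq 0$, which does not exist. Hence $p\in\partial\D$, and at a boundary point the limit is either a bubble on $\R^2_+$ (integrable profile) or a 1D profile depending on the scale; in both cases the classification results of \cite{LiZhu1995, GalvezMira2009, Zhang2003} force $\mathfrak{D}(p)\geq 1$. Part (2) follows because bounded $\int e^{u_n}$ rules out 1D profiles, so every blow-up is of bubble type, which requires $\mathfrak{D}(p)>1$ strictly; Gauss-Bonnet on each limit bubble gives the offset $\beta_i+2\pi$ between volume and boundary masses, and summing yields $\chi_n\to 2\pi j$. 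Part (3) follows by a standard Pohozaev-type identity against tangential vector fields localized at a support point of $\sigma$, producing $\mathfrak{D}_\tau(p)=0$ there.

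Part (4) is the heart of the argument. I decompose $S = S_1\cup S_0$ where $S_1$ collects points with bubble-type blow-up (necessarily with $\mathfrak{D}(p)>1$ by the classification) and $S_0$ the remaining concentration points, which must correspond to 1D profiles; from (1) and (3) these satisfy $\mathfrak{D}(p)=1$ and $\mathfrak{D}_\tau(p)=0$. If $\int_\D e^{u_n}\to+\infty$, the bounded mass carried by bubbles cannot account for this growth, so $S_0\neq\emptyset$. To bound $|S_1|$, I exploit the Morse index hypothesis. At each bubble centered at $x_n^{(i)}\to p_i\in S_1$, the linearization of the half-plane limit problem has Morse index at least one, so there exists a test function $\psi_n^{(i)}(x) = \eta(|x-x_n^{(i)}|/r)\,\Phi_n^{(i)}(x)$, where $\Phi_n^{(i)}$ is the rescaled negative eigenfunction and $\eta$ is a cutoff with radius much larger than the concentration scale but smaller than half the minimum distance between blow-up points, satisfying $Q_n(\psi_n^{(i)})<0$.

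By the $G=\langle g_k\rangle$-symmetry of $u_n$, bubbles appear in $\langle g_k\rangle$-orbits on $\partial\D$; since $g_k$ fixes only the origin, every boundary orbit has exactly $k$ elements. For each orbit I form the $G$-symmetrized direction $\Psi_n = \sum_{g\in G}\psi_n^{(i)}\circ g^{-1}$; the cutoff radii ensure the summands have disjoint supports, so $Q_n(\Psi_n)<0$ and distinct orbits produce linearly independent symmetric negative directions. The assumption $ind_G(u_n)\leq m$ then gives (number of orbits in $S_1$) $\leq m$, hence $|S_1|\leq km$. The main technical obstacle is precisely this Morse-index construction: I must verify that the scales $\lambda_n^{(i)}$ at the different bubbles are comparable enough that a single cutoff radius $r$ works for all of them simultaneously, and that the symmetrization across an orbit does not introduce sign cancellations between the negative part at one bubble and the (small) positive tail of another. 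Both are standard to check once one fixes $r$ between the maximum concentration scale and a fraction of $\min_{i\neq j}|p_i-p_j|$, but the bookkeeping has to be done inside the $G$-symmetric class rather than the full $H^1(\D)$, which is what requires revisiting the arguments of \cite{LopezSorianoMalchiodiRuiz2019} rather than merely citing them.
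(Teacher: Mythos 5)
Your treatment of parts (2)--(4) follows essentially the same route as the paper (defer (2) and (3) to the analysis of \cite{LopezSorianoMalchiodiRuiz2019}, and for (4) symmetrize one negative direction per bubble over the $\langle g_k\rangle$-orbit, count orbits against $ind_G(u_n)\leq m$, and get $S_0\neq\emptyset$ from the local mass bound at $S_1$-points). But there is a genuine gap in your part (1), precisely at the point the paper singles out as new relative to \cite{LopezSorianoMalchiodiRuiz2019}: the exclusion of \emph{interior} blow-up. The hypotheses only give $K_n\leq 0$ in $\D$ and $K<0$ on $\partial\D$, so $K$ may vanish at interior points. At such a point $p$ your rescaling produces the limit equation $-\Delta U=2K(p)e^U=0$, i.e.\ $U$ harmonic on $\R^2$; your claim that this equation ``does not exist'' is false --- with the normalization $U(0)=0$, $U\leq C$ coming from the selection of almost-maxima, Liouville's theorem just gives $U\equiv 0$ and no contradiction. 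The paper closes this by a comparison argument: the analysis of \cite{LopezSorianoMalchiodiRuiz2019} already excludes blow-up on $\{K<0\}$, hence $u_n\leq C$ on $\partial\Omega$ for a neighborhood $\Omega$ of $\{K=0\}$ compactly contained in $\D$; since $K_n\leq 0$ one has $-\Delta u_n\leq 2M$ in $\Omega$, and the maximum principle bounds $u_n$ by the solution of $-\Delta v=2M$ in $\Omega$, $v=C$ on $\partial\Omega$. Your sketch needs this (or an equivalent) argument; the pure rescaling does not suffice where $K$ degenerates.

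Two smaller points. First, you implicitly assume the centers $x_n$ can be taken as local maxima; since points of $S$ need not be isolated, the paper selects $x_n$ via Ekeland's variational principle to get the one-sided control needed for the local $C^2$ convergence of the rescalings --- worth at least acknowledging. Second, your part (4) only invokes ``index at least one'' at each bubble, which is indeed all that the bound $j\leq km$ requires; the paper additionally pins down that the limit profile has index exactly $1$, using the dichotomy of \cite[Theorem 4.2]{LopezSorianoMalchiodiRuiz2019} (index $>1$ forces infinite index, which after pull-back and symmetrization would contradict $ind_G(u_n)\leq m$ for arbitrarily large dimensions). Your version is acceptable for the statement as written, provided you justify the lower bound $ind(v)\geq 1$ for the $\mathfrak{D}(p)>1$ profiles by citing that same classification rather than asserting it.
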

	
	\begin{remark}\label{rem: S0empty}
		Theorem \ref{thm: Blowup} is basically contained in \cite[Theorem 1.4]{LopezSorianoMalchiodiRuiz2019} excepting for three aspects. First, the hypothesis of the nonpositivity of $K_n$ inside the disk. By using the maximum principle, we exclude that the blow-up phenomenon is localized in the interior, and therefore we can focus on the boundary. Second, here we have assumed boundedness of the $G$-symmetric index, which is smaller than the index in the whole space $H^1(\D)$. This question forces us to adapt the proof of \cite{LopezSorianoMalchiodiRuiz2019} to our setting. Third, in the description of $S_0$ we include the information $S_0 \neq \emptyset$. However, this was not explicitly stated in \cite{LopezSorianoMalchiodiRuiz2019}, so we include a comment on this question at the conclusion of the proof.
	\end{remark}
	
%
	
	\begin{proof}
        Take $p \in S$ as singular point. Due to the assumptions on $K$, we can take an open regular subset $\Omega\subset \overline{\Omega} \subset \mathbb{D}$ such that $K(x)<0$ in $\overline{\D} \setminus \Omega$. By the analysis carried out in \cite[Theorem 1.4]{LopezSorianoMalchiodiRuiz2019}, we infer that $S\cap\{x\in \D: \ K(x)<0\}=\emptyset$. As a consequence, $u_n<C$ on $\partial\Omega$. Let us now consider the function $v \in L^\infty\left(\Omega\right)$ which solves the following problem:
        \begin{equation*}
        \begin{cases}
            - \Delta v = 2M & \hbox{ in } \Omega, \\
            v= C & \hbox{ on } \partial \Omega.
        \end{cases}
        \end{equation*}
        where $M>0$ is such that $|\tilde{K}_n(x)|\leq M$ for all $n \in \N$. Since $K_n \leq 0$ the strong comparison principle implies that $u_n < v$ on $\overline{\Omega}$. In particular, we obtain that $u_n$ is bounded from above in $\Omega$, so that $S$ is necessarily contained in $\partial \mathbb{D}$ .

        Now, we will show that under a certain rescaling we obtain a limit solution defined in a half-plane. This is rather usual argument in the blow-up analysis of solutions of Liouville type equations, and comes back to \cite{BrezisMerle91, LiShafrir94} for the case of entire solutions. The main novelty here is that we do not know, a priori, that the quantities
		
		$$ \int_{\D} e^{u_n}, \ \int_{\partial \D} e^{u_n/2},$$ are uniformly bounded. This difficulty was addressed in \cite{LopezSorianoMalchiodiRuiz2019} in a way that will be recalled below.

		Let $p\in S$ a singular point of $u_n$ and consider a neighborhood $\mathcal{N}$ of $p$. Without loss of generality assume that $p=(1,0)$. By conformal invariance we can pass from $\mathcal{N}$ to a half ball $B_{0}^+(r)=\{(x,y)\in\mathbb{R}^2:x^2+y^2=r, y\geq0\}$. 
		This is justified by a classical argument, see for instance \cite{GidasSpruck1981}. It is possible to straighten $\Gamma:=\mathcal{N}\cap\partial\mathbb{D}$ by passing to $B^+_{0}(r)$ using a suitable diffeomorphism, and due to the conformal invariance of problem \eqref{ecua-compact} one can take this diffeomorphism as the following M\"{o}bius transform (in complex notation)
		
		\begin{equation*}\label{eq: diffeo}
			\Upsilon:\mathbb{R}^2_{+}\to\mathbb{D};\quad x\mapsto\Upsilon(x)=\frac{i-x}{x+i}.
		\end{equation*}
	Here and in what follows we denote $\R^2_+=\{(x, y) \in \R^2: \ y>0\}$. Notice that the point $p$ corresponds to the origin. Then, if we consider the function
		
		\begin{equation*}\label{eq:vnconforme}
			w_n(x):=u_n\left(\Upsilon(x)\right)+2\log|\Upsilon'(x)|=u_n\left(\Upsilon(x)\right)+2\log\frac{2}{|x+i|^2},
		\end{equation*}	
		thanks to the conformal invariance,  we have:
		
		\begin{equation*}\label{eq: probb+}
			\left\{
			\begin{array}{ll}
				-\Delta w_n+2\tilde{K}_n\left(\Upsilon(x)\right)=2K_n\left(\Upsilon(x)\right)e^{w_n} &\text{in }B_0^+(r),\\
				\\
				\frac{\partial w_n}{\partial \nu}+2\tilde{h}_n\left(\Upsilon(x)\right)=2h_n\left(\Upsilon(x)\right)e^{w_n/2}&\text{on } \Gamma_0^+(r),\\
			\end{array}\right.
		\end{equation*}
		for some small $r>0$. Here $\Gamma_0^+(r)=(-r,r)\times\{0\}$ is the straight portion of $\partial B_0^+(r)$.
		
		Now, take a sequence $y_{n}$ in $B_{0}^{+}(r)$ such that $y_{n}\rightarrow 0$ and $w_{n}(y_n)\rightarrow+\infty$. Observe that, a priori, we do not know if $p$ is isolated in $S$, and then we cannot assure that $y_n$ is a sequence of local maxima. We bypass this difficulty by using Ekeland's variational principle, which provides us with a sequence $x_n\in B_{0}^{+}(r)$ such that
		\begin{enumerate}
			\item $w_n(x_n)\leq w_n(y_n)$,
			\item $d(x_{n},y_n)\leq\sqrt{\varepsilon_{n}}$,
			\item $e^{\frac{-w_{n}(x_{n})}{2}}\leq e^{\frac{-w_{n}(z)}{2}}+\sqrt{\varepsilon_{n}}d(x_{n},z)$ for every $z\neq x_{n}$,
		\end{enumerate}
		where $\varepsilon_{n}=e^{\frac{-w_{n}(y_{n})}{2}}$.
		
		As a consequence $x_{n}\rightarrow 0$ and $w_{n}(x_n)\rightarrow+\infty$. Thanks to the point ($3$), this new sequence is convenient to rescale and move to a limit problem. Indeed, it was proved in \cite[Proposition 5.1]{LopezSorianoMalchiodiRuiz2019} that, up to subsequence,
		
		\begin{equation}\label{eq:convergencevn}
			v_n\rightarrow v\text{ in }\mathcal{C}^{2}_{loc}\left(\mathbb{R}\times(-t_0,+\infty)\right),
		\end{equation}
		where $v_n$ is a suitable rescaled function defined as
		
		\begin{equation*}\label{eq:vn}
			v_n(x):=w_n(\delta_n x+x_n)+2\log\delta_n,
		\end{equation*}	
		for $x\in B_n:=B_{0}^+(\frac{r}{\delta_n})\cap B_{-\frac{x_n}{2\delta_n}}(\frac{r}{\delta_n})$ and $\delta_{n}=e^{\frac{-w_n(x_n)}{2}}\rightarrow 0$.  
		In what follows we distinguish two cases:
		
		\underline{Case 1}:
		\begin{equation*}
			d(x_n,\Gamma_0^{+}(r))=O(\delta_n)\quad\text{ as }n\to+\infty.
		\end{equation*}
		Passing to a subsequence we can assume that $\frac{d(x_n,\Gamma_0^{+}(r))}{\delta_n}\to t_0\geq0$. Then, the function $v_n$ solves
		
		\begin{equation}\label{eq: rescaledproblem}
			\left\{
			\begin{array}{ll}
				-\Delta v_n+2\delta_n^2\tilde{K}_n\left(\delta_n\Upsilon(x)+x_n\right)=2K_n\left(\delta_n\Upsilon(x)+x_n\right)e^{v_n} &\text{in }B_n,\\
				\\
				\frac{\partial v_n}{\partial \nu}+2\delta_n\tilde{h}_n\left(\delta_n\Upsilon(x)+x_n\right)=2h_n\left(\delta_n\Upsilon(x)+x_n\right)e^{v_n/2} &\text{on }\Gamma_n,\\
			\end{array}\right.
		\end{equation}
		where $\Gamma_n$ is the straight portion of $\partial B_n$. In addition, thanks to \eqref{eq:convergencevn}, up to a translation, $v$ is a solution of the limit problem
		
		\begin{equation}
			\left\{
			\begin{array}{ll}
				-\Delta v=2K(0)e^{v}&\text{in }\mathbb{R}^{2}_{+},\\
				\\
				\frac{\partial v}{\partial \nu}=2h(0)e^{v/2}&\text{on }\partial\mathbb{R}^{2}_{+}.
			\end{array}\right.
			\label{eq: limit problem}
		\end{equation}

%
		Let us recall that last problem admits solutions only if $\mathfrak{D}_{0}\geq1$, and this proves $(1)$. In particular, if $\mathfrak{D}_0=1$ the only solutions of \eqref{eq: limit problem} are given by: 
		
		\begin{equation*}
			v_\lambda(s,t)=2\log\left(\frac{\lambda}{1-\lambda}\right)-\log|K_0|,\quad \lambda>0.    
		\end{equation*}
		For the case $\mathfrak{D}_0>1$ there are many different solutions, which have been classified in \cite{GalvezMira2009}.
		
		\underline{Case 2}: 
		\begin{equation*}
			\frac{d(x_n,\Gamma_0^{+}(r))}{\delta_n}\to+\infty,\quad\text{ as }n\to+\infty.
		\end{equation*}
		
		In this situation the rescaled domains $B_n$ invade all $\R^2$. Hence, reasoning as before, up to subsequence we have 
		
		$$v_n\rightarrow v\text{ in }\mathcal{C}^{2}_{loc}\left(\mathbb{R}^2\right),$$
		which is a solution of the equation
		
		$$-\Delta v=2K(0)e^{v} \quad\text{in }\mathbb{R}^{2}.$$
		If $K(0)<0$ it is well know, via Liouville's formula, that the above problem does not admit any solution.\\
		
		We now prove $(4)$, assume that $u_n$ are $G$-symmetric functions with $G= \langle g_k \rangle$, and $ind_G(u_n)\leq m$ for some $m$ and all $n \in \N$. Next lemma uses the Morse index bound on $u_n$ to reduce the possible solutions $v$. In order to do that, we define the Morse index of $v$ as:
		
		\begin{equation*}
			ind(v)= \sup \{dim\, E, \ E \subset C_0^{\infty}(\overline{\R^2_+}): \ \tilde{Q}|_E \mbox{ is negative definite}\},
		\end{equation*}
		where $\tilde{Q}$ is defined as:
		
		\begin{equation*}\label{eq: QuadraticLimit}
			\tilde{Q}(\psi)=\int_{\mathbb{R}^{2}_{+}}\left(|\nabla\psi|^2+2\psi^2K(0)e^{v}\right)-\int_{\partial\mathbb{R}^{2}_{+}}\psi^2h(0) e^{v/2}.
		\end{equation*}
		
		Here $C_0^{\infty}(\overline{\R^2_+})$ stands the set of test functions whose support is bounded.

		\begin{lemma}\label{lem: Blowup}
			If $p \in S_1$, the Morse index of $v$ is $1$. Moreover, $S_1=\{p_1, \dots, p_j\}$ with $j \leq k \, m$.
		\end{lemma}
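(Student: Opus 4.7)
The strategy is to use the $G$-symmetric Morse index bound $ind_G(u_n)\leq m$ to control both the Morse index of the limit bubble $v$ at each blow-up point and the cardinality of $S_1$, via a rescaling--localization--symmetrization argument. Both assertions ($ind(v)=1$ and $j\leq km$) will be obtained together. Note that since $K<0$ on $\partial\D$, every $p\in S_1$ falls into Case~$1$, so the analysis above produces a limit solution $v$ on $\R^2_+$ with $\mathfrak{D}(p)>1$.

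I would first prove the lower bound $ind(v)\geq 1$ by exhibiting a compactly supported test function $\psi_0\in C^\infty_0(\overline{\R^2_+})$ with $\tilde Q(\psi_0)<0$. Since $\mathfrak{D}(p)>1$, the limit $v$ is a genuinely two-dimensional non-trivial solution from the Galvez--Mira classification \cite{GalvezMira2009} (in particular not of the $1$D form that arises when $\mathfrak{D}=1$). A natural candidate is a truncation combining a Jacobi field from translation and dilation invariance with a suitable additive constant, tuned so that the boundary contribution $-\int_{\partial\R^2_+}\psi_0^{\,2}\,h(0)e^{v/2}$ dominates and renders $\tilde Q$ negative.

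Next I would transfer negative directions from $v$ to $u_n$. Let $\psi_1,\dots,\psi_\mu\in C^\infty_0(\overline{\R^2_+})$ span a subspace on which $\tilde Q$ is negative definite. For each $p_i\in S_1$, with the rescaling data $(x_{n,i},\delta_{n,i},\Upsilon_i)$ from Case~$1$, define
\[
\phi_{i,l}(z) := \psi_l\!\left(\delta_{n,i}^{-1}\bigl(\Upsilon_i^{-1}(z)-x_{n,i}\bigr)\right),
\]
supported in a small neighborhood of $p_i$. The same change of variables that led to \eqref{eq: rescaledproblem}, combined with $\mathcal{C}^2_{loc}$-convergence $v_n\to v$, gives $Q_n(\phi_{i,l})<0$ for $n$ large, and the supports can be chosen pairwise disjoint across the finite set $S_1$. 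Now $G=\langle g_k\rangle$ acts on $\partial\D$ by rotations of angle $2\pi/k$ without fixed points, so each $G$-orbit in $S_1$ has exactly $k$ elements. Choosing one representative per orbit and setting
\[
\Phi_{i,l}(x):=\sum_{g\in G}\phi_{i,l}(g\cdot x),
\]
the disjointness of the translates together with $G$-invariance of $K_n$, $h_n$, $u_n$ yields $Q_n(\Phi_{i,l})=k\,Q_n(\phi_{i,l})<0$, and negative definiteness on $\mathrm{span}\{\Phi_{i,l}\}$ follows from negative definiteness of $\tilde Q$ on $\mathrm{span}\{\psi_l\}$ together with disjoint supports across different orbits. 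Consequently
\[
\mu\cdot\frac{j}{k}\;\leq\;ind_G(u_n)\;\leq\;m,
\]
which with $\mu\geq 1$ gives $j\leq km$.

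The main obstacle I anticipate is the upper bound $ind(v)\leq 1$. My plan is to analyze the spectrum of the linearized operator at $v$ directly: via a conformal change of variables the half-plane bubble is mapped to a constant curvature solution on a compact uniformized domain, where the associated Jacobi operator is self-adjoint with compact resolvent and discrete spectrum, and translation/dilation invariance produce explicit kernel elements. Using the classification in \cite{GalvezMira2009} to parametrize solutions and to identify exactly which modes are unstable, one should be able to count the negative eigenvalues and conclude that there is exactly one. Combined with the lower bound above, this establishes $ind(v)=1$ as stated.
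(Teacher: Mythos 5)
Your localization--symmetrization step for the bound $j\leq km$ is essentially the paper's argument: pull back negative directions of $\tilde Q$ through the rescaling, sum over the $G$-orbit to obtain $G$-symmetric test functions with pairwise disjoint supports, and compare with $ind_G(u_n)\leq m$. That part is sound. The genuine gap is in your treatment of the upper bound $ind(v)\leq 1$. You propose to compactify and count the negative eigenvalues of the linearized operator directly from the G\'alvez--Mira classification, but this cannot work as stated: the limit problem \eqref{eq: limit problem} with $\mathfrak{D}_0>1$ admits solutions of \emph{infinite} Morse index (the classified family contains profiles with unbounded boundary length, along which one can place infinitely many disjointly supported test functions making $\tilde Q$ negative, so the form on $C_0^\infty(\overline{\R^2_+})$ does not in general correspond to an operator with discrete, finitely-negative spectrum). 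There is therefore no universal spectral count yielding ``exactly one negative eigenvalue'': $ind(v)=1$ is not a property of every solution of the limit problem, only of those that can actually arise as blow-up limits of the $u_n$. The mechanism you are missing is the dichotomy of \cite[Theorem 4.2]{LopezSorianoMalchiodiRuiz2019}: if $ind(v)>1$ then $ind(v)=+\infty$. One then takes an $l$-dimensional negative subspace for $\tilde Q$ with $l>m$, transports it back to $u_n$ by exactly your rescaling--symmetrization construction, and contradicts $ind_G(u_n)\leq m$. In short, the uniform index bound on $u_n$ must be used twice --- once to cap $ind(v)$ and once to count $|S_1|$ --- whereas you use it only for the counting.

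A secondary weakness: the lower bound $ind(v)\geq 1$ (needed so that each point of $S_1$ contributes at least one negative direction, i.e.\ your $\mu\geq1$) is only sketched (``a truncation \dots tuned so that the boundary contribution dominates''), and your claim that $\mathfrak{D}(p)>1$ forces $v$ to be a genuinely two-dimensional profile is not justified. Both facts are again supplied by \cite[Theorem 4.2]{LopezSorianoMalchiodiRuiz2019} and should be cited rather than re-derived.
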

		
		\begin{proof} 
			By contradiction, suppose that $ind(v)>1$. By \cite[Theorem 4.2]{LopezSorianoMalchiodiRuiz2019}, $ind(v)=+\infty$; this means that for any $l \in \N$, there exists a vector space of dimension $l$, namely $E\subset\mathcal{C}_{0}^{\infty}\left(\mathbb{R}^2_+\right)$, such that 
			
			\begin{equation*}
				\tilde{Q}(\psi)=\int_{\mathbb{R}^{2}_{+}}\left(|\nabla\psi|^2+2\psi^2K(0)e^{v}\right)-\int_{\partial\mathbb{R}^{2}_{+}}\psi^2h(0) e^{v/2}<-\varepsilon^2,
			\end{equation*}
			for all \( \psi \in E \) with \( \|\psi\|_{H^1(\mathbb{D})} = 1 \).
            
			On the other hand, let us define the subset
			
			\begin{equation*}
				\mathcal{M}:=\left\{z\in\mathbb{D}: \Upsilon^{-1}\left(z\right)\delta_n+x_n\in\supp(\psi)\right\}\subset\bar{\mathbb{D}},
			\end{equation*}
			which is, for $n$ large enough, a neighborhood of $p=(1,0)$.  Moreover, let $G= \langle g_k \rangle$ be a symmetric group of order $k$ such that every $g^{l}(\mathcal{M})$ and $g^{j}(\mathcal{M})$ are disjoint subsets, $0\leq l<j\leq k-1$. Now consider the following function
			
			\begin{equation*}
				\psi_{n}:=\psi\left(\Upsilon^{-1}(z)\delta_n+x_n\right) \quad z\in\mathbb{D}.
			\end{equation*}
			Recalling the expression \eqref{eq:quadraticform}, following the argument of straightening, the scaling variables and the uniform convergence on compacts, we have
			
			\begin{equation*}
				\begin{split}
					Q_n(\psi_n)=&\int_{\mathbb{D}}\left(|\nabla\psi_n|^{2}-2\psi_n^{2}K_n(z)e^{u_n}\right)-\int_{\partial\mathbb{D}}\psi_n^{2}h_n(z)e^{u_n/2}\\
					=&\int_{\mathcal{M}}\left(|\nabla\psi_n|^{2}-2\psi_n^{2}K_n(z)e^{u_n}\right)-\int_{\partial\mathcal{M}}\psi_n^{2}h_n(z)e^{u_n/2}\\
					=&\int_{\Upsilon^{-1}(\mathcal{M})}\left(|\nabla\left(\psi_n\circ\Upsilon\right)|^{2}-2(\psi_n\circ\Upsilon)^{2}\left(K_n\circ\Upsilon\right)(z)e^{w_n}\right)\\
					&-\int_{\Upsilon^{-1}(\partial\mathcal{M})}(\psi_n\circ\Upsilon)^{2}\left(h_n\circ\Upsilon\right)(z)e^{w_n/2}\\
					=&\int_{B_n}\left(|\nabla\psi_n\left(\Upsilon\left(z\delta_n+x_n\right)\right)|^{2}-2\left(\psi_n\left(\Upsilon\left(z\delta_n+x_n\right)\right)\right)^{2}K_n\left(\Upsilon\left(z\delta_n+x_n\right)\right)e^{v_n}\right)\\
					&-\int_{\Gamma_n}\left(\psi_n\left(\Upsilon\left(z\delta_n+x_n\right)\right)\right)^{2}h_n\left(\Upsilon\left(z\delta_n+x_n\right)\right)e^{v_n/2}\\
					=&\int_{\mathbb{R}^{2}_{+}}\left(|\nabla\psi|^2+2\psi^2K(0)e^{v}\right)-\int_{\partial\mathbb{R}^{2}_{+}}\psi^2h(0) e^{v/2}<-\varepsilon^2.
				\end{split}
			\end{equation*}	
			where $\partial\mathcal{M}:=\mathcal{M}\cap\mathbb{D}$. Then one can construct the following $G$-symmetric function
			\begin{equation*}
				\Psi_{n}:=\sum_{j=0}^{k-1}\psi_{n}\left(g^{j}(z)\right),
			\end{equation*} 
			which is well defined since by definition $\psi_n$ has compact and disjoint supports. Therefore, taking advantage of the symmetric group $G$ and following the straightened and scaled argument, it allows us to see that
			\begin{equation*}
				\begin{split}
					Q_{n}(\Psi_{n})&=\int_{\mathbb{D}}\left(|\nabla\Psi_n|^{2}-2\Psi_n^{2}K_n(z)e^{u_n}\right)-\int_{\partial\mathbb{D}}\Psi_n^{2}h_n(z)e^{u_n/2}\\
					&=\sum_{j=0}^{k-1}\left(\int_{g^{j}(\mathcal{S})}\left(|\nabla\Psi_n|^{2}-2\Psi_n^{2}K_n(z)e^{u_n}\right)-\int_{g^{j}(\mathcal{S}\cap\partial\mathbb{D})}\Psi_n^{2}h_n(z)e^{u_n/2}\right)\\
					&=k\left(\int_{\mathcal{M}}\left(|\nabla\psi_n|^{2}-2\psi_n^{2}K_n(z)e^{u_n}\right)-\int_{\partial\mathcal{M}}\psi_n^{2}h_n(z)e^{u_n/2}\right)\\
					&=kQ_n(\psi_n)<-\varepsilon^2.
				\end{split}
			\end{equation*}
		 Moreover, $span \{ \Psi_n: \ \psi \in E \}$ is clearly a vector space of dimension $l$. By taking $l >m$, we obtain a contradiction with $ind_G(u_n)\leq m$.

The aim now is to prove that $|S_1|\leq k m$. Consider $u_n\in H^{1}_{G}(\mathbb{D})$ a sequence of solutions to the problem \eqref{ecua-compact} with $ind_G(u_n)=m$. Let us argue again by contradiction and suppose that $|S_1|>k\cdot m$. Due to the $G$-symmetry, there exists at least $(m+1)k$ singular points of $u_n$ in $S_1$, namely $p_i$ with $i=1,\ldots, (m+1)k$. 
			
			Moreover, if we divide $\mathbb{D}$ into $k$ identical disk sectors $T_s$, where $s=1,\ldots, k$, according to the order of the group $G$ in each sector there must be at least $m+1$ singular points. Therefore, without loss of generality, we can consider $m+1$ singular points $p_j\in S\cap T_1$ of $u_n$ in $T_1$. We can assume that
			
			\begin{equation*}
				B_{p_j}^{+}(r_j)\cap B_{p_l}^{+}(r_l)=\emptyset,\quad\text{for all } 1\leq j<l\leq m+1.
			\end{equation*}
			
			As before, one can rescale the solutions $u_n$ in each neighborhood until one obtains a profile $v$ which is a solution of the boundary problem \eqref{eq: limit problem} and whose Morse is equal to $1$. But if so, one can take $m+1$ compactly supported functions $\psi_j$ defined in respective $1$-dimensional vector spaces of $\mathcal{C}_0^\infty(\mathbb{R}^2_+)$, such that
			
			\begin{equation*}
				\tilde{Q}\left(\psi_j\right)<-\varepsilon^2_j,\quad\text{for all } 1\leq j\leq m+1.
			\end{equation*}
			We are able to define a $m+1$-dimensional vector space $\tilde{E}:=span(\psi_1,\psi_2,\ldots, \psi_{m+1})$ generated by $m+1$ compactly-supported functions such that the above quadratic form would be negative-definite on $\tilde{E}$.  Following the same arguments as in the proof of the first assertion one can conclude that $ind_G(u_n)\geq m+1$, reaching a contradiction.
		\end{proof}
		Finally we show that under the assumptions of Theorem \ref{thm: Blowup} (4),  $S_0 \neq \emptyset$. To see this, we recall the following result proved in \cite[Lemma 7.4]{LopezSorianoMalchiodiRuiz2019}
		\begin{lemma}
			Let $p\in S_1$, then there exist fixed constants $r,C>0$ such that
			\begin{equation*}
				\int_{B_{p}(r)\cap\mathbb{D}}e^{u_n}\leq C;\quad\int_{B_{p}(r)\cap\partial\mathbb{D}}e^{u_n/2}\leq C;\quad u_{n}\rightarrow-\infty\text{ on }\partial B_{p}(r)\cap\mathbb{D}.
			\end{equation*}
		\end{lemma}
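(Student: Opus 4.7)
The plan is to combine the isolation of $p$ in the singular set $S$ (guaranteed by the previous lemma) with the finite-mass property of the half-plane limit profile $v$ and a neck analysis that traps the mass on the bubble scale.

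First, since $p \in S_1$ and $|S_1| \le k\,m$ from Lemma \ref{lem: Blowup}, I can choose $r>0$ small enough so that $\overline{B_p(2r)}\cap\overline{\mathbb{D}}\cap S = \{p\}$; by continuity of $\mathfrak{D}$ and the fact that $\mathfrak{D}(p)>1$, shrinking $r$ if necessary, I also assume $\mathfrak{D}>1$ on $\overline{B_p(2r)}\cap\partial\mathbb{D}$, so no point of $S_0$ lies in $\overline{B_p(2r)}$ either. Therefore, in any annular region $\left(\overline{B_p(r)}\setminus B_p(\rho)\right)\cap\overline{\mathbb{D}}$ with $\rho>0$ fixed, $u_n$ is uniformly bounded from above. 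The non-trivial task is to control what happens as $\rho\to 0$.

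For the integral bounds, I would proceed in two steps. Fix a sequence of blow-up maxima $x_n\to p$ with $\delta_n = e^{-w_n(x_n)/2}\to 0$ and the rescaled profiles $v_n$ of \eqref{eq:convergencevn} converging to $v$ solving \eqref{eq: limit problem} with $\mathfrak{D}_0=\mathfrak{D}(p)>1$. By the Gálvez–Mira classification, every such $v$ has finite total area and boundary length
\begin{equation*}
\int_{\mathbb{R}^2_+} |K(0)|e^v < +\infty,\qquad \int_{\partial\mathbb{R}^2_+} h(0)e^{v/2} < +\infty.
\end{equation*}
Reverting the rescaling, for every fixed $R>0$ one gets
\begin{equation*}
\int_{B_{\delta_nR}(x_n)\cap\mathbb{D}} |K_n|e^{u_n} \longrightarrow \int_{B_R(0)\cap\mathbb{R}^2_+} |K(0)|e^v,
\end{equation*}
and an analogous statement on the boundary. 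It remains to exclude mass escaping into the neck region $\bigl(B_p(r)\setminus B_{\delta_nR}(x_n)\bigr)\cap\overline{\mathbb{D}}$ as $n\to\infty$ and then $R\to\infty$. If the neck mass did not vanish in that double limit, applying Ekeland's variational principle to $w_n$ inside the neck would produce a second sequence $x_n'$ with $w_n(x_n')\to+\infty$ and the good concentration property used in Case 1; its limit point would give a new blow-up point of $S\cap\overline{B_p(2r)}\neq\{p\}$, contradicting the choice of $r$. This iteration must terminate after a single step, yielding the uniform bounds $\int_{B_p(r)\cap\mathbb{D}}e^{u_n}\le C$ and $\int_{B_p(r)\cap\partial\mathbb{D}}e^{u_n/2}\le C$.

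For the pointwise conclusion $u_n\to-\infty$ on $\partial B_p(r)\cap\mathbb{D}$, I would decompose $u_n = u_n^{(1)}+u_n^{(2)}$ on $B_p(r)\cap\mathbb{D}$, where $u_n^{(1)}$ absorbs the inhomogeneous right-hand sides with zero Dirichlet data on the circular arc $\partial B_p(r)\cap\mathbb{D}$ and the natural Neumann data $2h_n e^{u_n/2}-2\tilde h_n$ on $\partial\mathbb{D}\cap B_p(r)$. A Brezis–Merle type estimate, using the $L^1$ bounds just established on $|K_n|e^{u_n}$ and $h_n e^{u_n/2}$, yields $e^{\alpha u_n^{(1)}}\in L^q$ uniformly for some $\alpha,q>1$, hence uniform $L^\infty$ bounds on $u_n^{(1)}$ on $\partial B_p(r)\cap\mathbb{D}$. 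The remaining piece $u_n^{(2)}$ is harmonic in $B_p(r)\cap\mathbb{D}$ with Neumann data $o(1)$ on $\partial\mathbb{D}\cap B_p(r)$, and therefore satisfies a Harnack alternative on compact subsets of $\overline{B_p(r)}\cap\overline{\mathbb{D}}\setminus\{p\}$: either it stays uniformly bounded, or it tends to $-\infty$ uniformly. The first alternative is incompatible with $p\in S$ (it would force $u_n$ bounded on $\partial B_p(r)\cap\mathbb{D}$ and, via the maximum principle together with $K_n\le 0$, bounded inside $B_p(r)\cap\mathbb{D}$, preventing blow-up at $p$), so $u_n^{(2)}\to-\infty$ uniformly on $\partial B_p(r)\cap\mathbb{D}$, and hence so does $u_n$.

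The main obstacle is the neck analysis ruling out secondary concentration: a priori the total integrals $\int_{\mathbb{D}}e^{u_n}$ need not be bounded, so standard global $L^1$-concentration arguments do not immediately apply. The crux is that the iterative Ekeland selection must terminate — and this is precisely what the isolation of $p$ in $S$, inherited from the Morse index bound proved in Lemma \ref{lem: Blowup}, guarantees.
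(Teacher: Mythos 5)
First, a point of comparison: the paper does not actually prove this lemma --- it is quoted verbatim as \cite[Lemma 7.4]{LopezSorianoMalchiodiRuiz2019} and used as a black box, so your attempt is being measured against the original proof in that reference rather than anything in the present text. Within your attempt there are two genuine gaps. The first is the claim that ``by the G\'alvez--Mira classification, every such $v$ has finite total area and boundary length.'' This is false: for $\mathfrak{D}_0>1$ the half-plane problem also admits the one-dimensional solutions $v(s,t)=2\log\bigl(\lambda/\sinh(\lambda(t+a))\bigr)-\log|K(0)|$ with $\cosh(\lambda a)=\mathfrak{D}_0$, which have infinite area and infinite boundary length. These must be excluded by the Morse index information ($\mathrm{ind}(v)=1$ from the preceding lemma, combined with the fact that the $1$D profiles have infinite index), not by the classification alone; your write-up attributes finiteness to the wrong ingredient.

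The second and more serious gap is the neck analysis. You assert that if the mass in $\bigl(B_p(\rho)\setminus B_{\delta_nR}(x_n)\bigr)\cap\overline{\mathbb{D}}$ does not vanish, then Ekeland's principle produces a second blow-up point in $\overline{B_p(2r)}$. That implication fails: the neck has measure bounded below (of order $\rho^2$), so $\int e^{u_n}$ can stay bounded away from zero there while $u_n$ remains uniformly bounded --- no new point of $S$ is forced. Moreover, the uniform upper bound on $u_n$ in the annulus $B_p(r)\setminus B_p(\rho)$ degenerates as $\rho\to0$, so one cannot pass to the limit $\rho\to 0$ to trap all the mass on the bubble scale. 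Obtaining a uniform bound on $\int_{B_p(r)\cap\mathbb{D}}e^{u_n}$ genuinely requires pointwise decay in the neck (a $\sup+2\log\mathrm{dist}$-type estimate, a Pohozaev identity, or a Green representation argument as in the cited Lemma 7.4 of \cite{LopezSorianoMalchiodiRuiz2019}); the isolation of $p$ in $S$ is not enough. Since your final step ($u_n\to-\infty$ on $\partial B_p(r)\cap\mathbb{D}$ via Brezis--Merle plus a Harnack alternative) is predicated on those $L^1$ bounds, it inherits the gap, even though its outline is otherwise reasonable.
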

		
		\medskip 
		
		Suppose $\int_{\mathbb{D}}e^{u_n}$ is unbounded and $S_0=\emptyset$, due to last lemma for all $p_i\in S$ we have that 
		
		\begin{equation*}
			\int_{\mathbb{D}}e^{u_n}=\sum_{i=1}^{km}\left(\int_{B_{p_{i}}(r_i)\cap\mathbb{D}}e^{u_n}+\int_{\mathbb{D}\setminus\left(\cup_{i=1}^{km}B_{p_i}(r_i)\right)}e^{u_n}\right)\leq C,
		\end{equation*}
		for suitable $r_i>0$ and  $i=1,\ldots,km$. This contradicts the unbounded hypothesis then $S_{0}\neq\emptyset$.
	\end{proof}
	
	\begin{proof}[\underline{Proof of Theorem \ref{thm: existence}}] We will divide the proof in two parts, according to the types of symmetry groups we have been working with.
		
		\medskip {\bf 	Case 1: $G = \langle g_k \rangle$}  
		
		We consider the sequence $u_n$ given by Proposition \ref{prop:ConvergenceSubsequence}, and we claim that they are uniformly bounded from above. From this one readily obtain compactness of solutions, and hence we can get a solution of \eqref{eq: Problem 1} in the limit.

Reasoning by contradiction, assume that $\max \{u_n\} \to + \infty$, and apply Theorem \ref{thm: Blowup}. Since the set:
		\begin{equation*}
			\{p\in\partial\mathbb{D}:\mathfrak{D}(p)=1, \mathfrak{D}_\tau(p)=0\}
		\end{equation*}
		is empty, we conclude that $\int_{\D} e^{u_n}$ is bounded. By symmetry, we conclude that $S_1$ contains at least $k$ points. However, with the quantization given by
the second point of Theorem~\ref{thm: Blowup}, one obtains
		\begin{equation*}
			\chi_n=	\int_{\mathbb{D}}\tilde{K}_n+\int_{\partial\mathbb{D}}\tilde{h}_{n} \to 2 k\pi,
		\end{equation*}
		which is a contradiction.
		
		\medskip {\bf 	Case 2: $G=O(2)$}
		
		In this case problem \eqref{ecua-compact} can be written as:
		
		\begin{equation*}
			\left\{
			\begin{array}{ll}
				-u''_n-\frac{u'_n}{r}+2\tilde{K}_n=2K_n(r)e^{u_n}&\text{in }0<r<1,\\
				\\
				u'_n(1)+2\tilde{h}_n=2h_n(1)e^{u_n(1)/2},&\\
				\\
				u'_n(0)=0.
			\end{array}\right.
			\label{eq: perturbed radial problem 1}
		\end{equation*}
		
		
		Multiplying the main equation by $u'_n$ and integrating you get:
		\begin{equation*}
			-\frac{u'_n(1)^2}{2}-\int_{0}^{1}\frac{(u'_n)^2}{r}+2\tilde{K}_n\left(u_n(1)-u_n(0)\right)=\int_{0}^{1}2K_n(r)u'_ne^{u_n}.
		\end{equation*}
		
		Now, integrating by parts and considering the boundary conditions we can write each of the sides of the above equation as
		\begin{equation*}
			\begin{split}
				A_1:=&-2h_n^2(1)e^{u_n(1)}-2\tilde{h}_n^2+4h_n(1)\tilde{h}_ne^{u_n(1)/2}-\int_{0}^{1}\frac{(u'_n)^2}{r}+2\tilde{K}_n\left(u_n(1)-u_n(0)\right),\\
				A_2:=&2K_n(1)e^{u_n(1)}-2K_n(0)e^{u_n(0)}-\int_{0}^{1}2K'_n(r)e^{u_n}.
			\end{split}
		\end{equation*}
		
		After grouping the terms, $A_2-A_1=0$, we have the following inequality
		\begin{equation*} 
			2e^{u_n(1)}\left(K_{n}(1)+h_n^2(1)\right)-4h_n(1)\tilde{h}_ne^{u_n(1)/2}-2\tilde{K}_n\left(u_n(1)-u_n(0)\right)\leq \int_{0}^{1}2K'_n(r)e^{u_n}.
		\end{equation*}
		
		Clearly,
		
		$$ 	\int_{0}^{1}2K'_n(r)e^{u_n} \leq 2 \max{ \frac{K_n'}{|K_n|}  }\int_0^1 |K_n| e^{u_n} = 2 \max{ \frac{K_n'}{|K_n|}  } (h_n(1) e^{u_n(1)/2} - \chi_n),$$
		where $\chi_n \to 2 \pi$. Then,
		
		\begin{align*}
			2e^{u_n(1)}\left(K_{n}(1)+h_n^2(1)\right)   \\ \leq 4h_n(1)\tilde{h}_ne^{u_n(1)/2}+2\tilde{K}_n\left(u_n(1)-u_n(0)\right)+  2 \max{ \frac{K_n'}{|K_n|}  } (h_n(1) e^{u_n(1)/2} - \chi_n).
		\end{align*}
		Now recall that $u_n(0)$ is bounded, $\tilde{h}_n \to 1$, $\tilde{K}_n \to 0$ and $\mathfrak{D}>1$, and then we obtain that $e^{u_n(1)}$ is uniformly bounded, and then $u_n(1) \leq C$ for some $C \in \R$. Then, the maximum principle implies that $u_n \leq \bar{u}$, where $\bar{u}$ solves:

		\begin{equation*}
			\left\{
			\begin{array}{ll}
				- \Delta \bar{u} = \sup_{n} \| \tilde{K}_n \|_{L^{\infty}} &\text{ in } \D,\\
				\\
				\bar{u}= C & \text{ in } \partial \D.
			\end{array}\right.
		\end{equation*}
		
		This implies that $u_n$ is bounded from above and we conclude.
		
	\end{proof}
	
	
	\section{Proof of Theorem \ref{thm: nonexistence}}
In this section we proof the non existence result of Theorem \ref{thm: nonexistence}. Observe that if $u$ is a solution of \eqref{eq: Problem 1}, then we can make the change of scale $v= u+c$ to obtain:
	
	\begin{equation*}
		\left\{
		\begin{array}{ll}
			-\Delta v=2 \lambda^2 K(x)e^{v}&\text{in }\mathbb{D},\\
			\\
			\frac{\partial v}{\partial \nu}+2=2 \lambda h(x)e^{v/2}&\text{on }\partial\mathbb{D},\\
		\end{array}\right.
	\end{equation*}
	with $\lambda= e^{-c/2}$. Then, we can reduce ourselves to the case $c_0=1$ in Theorem \ref{thm: nonexistence}. 
	
	The next lemma will be of use:
	\begin{lemma}\label{lem: sub-sup}
		If the problem (\ref{eq: Problem 1}) is solvable with $C^{0,\alpha}$ functions $K$, $h$, and  $K(x) \leq -1$, then there exists $\tilde{u} \in C^{\infty}(\D) \cap C^{1, \alpha} (\overline{\D})$ a solution of:
		
		\begin{equation}
			\left\{
			\begin{array}{ll}
				-\Delta \tilde{u}= - 2 e^{\tilde{u}}&\text{in }\mathbb{D},\\
				\\
				\frac{\partial \tilde{u}}{\partial \nu}+2=2 \tilde{h}(x)e^{v/2}&\text{on }\partial\mathbb{D},\\
			\end{array}\right.
		\end{equation} \label{K=-1}
		where $\tilde{h}(x) \leq h(x)$ for all $x \in \partial \D$. 
	\end{lemma}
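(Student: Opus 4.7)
The plan is to exploit the fact that the given solution $u$ is automatically a subsolution of the equation with $K \equiv -1$, and then construct $\tilde u$ by solving an auxiliary Dirichlet problem with the same boundary trace as $u$. Since $K(x)\leq -1$, we have
$$-\Delta u + 2 e^u = 2\bigl(K(x)+1\bigr)e^u \leq 0 \quad \text{in } \D,$$
so $u$ is a classical subsolution of $-\Delta w + 2 e^w = 0$. Standard elliptic regularity (interior Schauder plus boundary regularity for the oblique boundary term) gives $u \in C^{\infty}(\D)\cap C^{1,\alpha}(\overline{\D})$, so its trace on $\partial\D$ is $C^{1,\alpha}$.

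Next I would consider the Dirichlet problem $-\Delta \tilde u + 2 e^{\tilde u} = 0$ in $\D$, $\tilde u = u$ on $\partial\D$. Existence and uniqueness of $\tilde u$ follow from the direct method applied to the strictly convex, coercive functional
$$F(w):=\int_{\D}\Bigl(\tfrac12|\nabla w|^2 + 2 e^w\Bigr)$$
over the affine space $u + H^1_0(\D)$; alternatively, monotone iteration between the subsolution $u$ and the constant supersolution $\max_{\overline{\D}} u$ also works. Elliptic bootstrap, together with $C^{1,\alpha}$ Dirichlet data, upgrades $\tilde u$ to $C^{\infty}(\D)\cap C^{1,\alpha}(\overline{\D})$.

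I would then compare $u$ and $\tilde u$. The difference $u - \tilde u$ satisfies a linear equation of the form $-\Delta(u-\tilde u) + c(x)(u-\tilde u) \leq 0$ with $c(x)\geq 0$ (by the mean value theorem applied to $e^w$), and it vanishes on $\partial\D$, so the maximum principle gives $u\leq\tilde u$ in $\overline{\D}$. Because $u-\tilde u\leq 0$ in $\D$ with equality on $\partial\D$, Hopf's boundary point lemma yields $\partial_\nu(u-\tilde u)\geq 0$ on $\partial\D$, i.e.\ $\partial_\nu\tilde u\leq\partial_\nu u$. Using that $\tilde u = u$ on $\partial\D$, I define
$$\tilde h(x):=\frac{\partial_\nu\tilde u(x)+2}{2\, e^{\tilde u(x)/2}}, \quad x\in\partial\D,$$
so that $\tilde u$ solves the stated boundary equation; combining the preceding normal derivative inequality with the boundary equation satisfied by $u$ gives
$$2\tilde h(x)\,e^{\tilde u(x)/2}=\partial_\nu\tilde u+2 \leq \partial_\nu u+2 = 2 h(x)\,e^{u(x)/2} = 2 h(x)\,e^{\tilde u(x)/2},$$
hence $\tilde h \leq h$ on $\partial\D$, as required.

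The main delicate point I anticipate is obtaining $\tilde u\in C^{1,\alpha}(\overline{\D})$ starting from only $C^{1,\alpha}$ Dirichlet data, so that $\partial_\nu\tilde u$ is defined classically on $\partial\D$ and Hopf's lemma applies; this requires appealing to boundary Schauder estimates adapted to the semilinear equation rather than to a purely linear theory, but it is a standard (if technical) ingredient.
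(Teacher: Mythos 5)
Your proof is correct and follows essentially the same route as the paper: observe that $u$ is a subsolution of the $K\equiv -1$ equation, solve the Dirichlet problem with boundary data $u$ by the sub/supersolution method, deduce $u\leq \tilde u$ with equality on $\partial\D$ so that $\partial_\nu \tilde u \leq \partial_\nu u$, and read off $\tilde h \leq h$. The only cosmetic differences are that the paper takes the harmonic extension of $u|_{\partial\D}$ as the supersolution where you take the constant $\max_{\overline{\D}}u$, and that you make the comparison $u\leq\tilde u$ explicit via a linearized maximum principle (Hopf's lemma is not even needed, since the one-sided inequality for $\partial_\nu(u-\tilde u)$ already follows from $u-\tilde u\leq 0$ vanishing on $\partial\D$ together with $C^1$ regularity up to the boundary).
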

	
	\begin{proof} Observe that a solution $u$ of \eqref{eq: Problem 1} solves:
		
		\begin{equation*}
			-\Delta {u}(x)\leq-2e^{{u}} \qquad \text{ in }\mathbb{D}.
		\end{equation*}
		Now, let $\bar{u}\in\mathcal{C}^2(\mathbb{D})$ such that
		
		\begin{equation*}
			\left\{
			\begin{array}{ll}
				-\Delta\bar{u}(x)=0 &\text{in }\mathbb{D},\\
				\\
				\bar{u}= u &\text{on }\partial\mathbb{D}.\\
			\end{array}\right.
		\end{equation*}
		
		By the maximum principle,  $\bar{u}\geq u $. It follows from sub-super solution method, (see for instance \cite[Chapter 9]{Evans2010}) that we may find a solution $\tilde{u}$ of the problem:
		
		\begin{equation*}
			\left\{
			\begin{array}{ll}
				-\Delta \tilde{u}= - 2 e^{\tilde{u}}&\text{in }\mathbb{D},\\
				\\
				\tilde{u} = u &\text{on }\partial\mathbb{D}.\\
			\end{array}\right.
		\end{equation*}
		
		Clearly, $\tilde{u}$ solves \eqref{K=-1} with:
		
		$$ \tilde{h}= e^{-\tilde{u}/2} \left(\frac{\partial \tilde{u}}{\partial\nu} + 2\right).$$
		
		Moreover, $u \leq \tilde{u} \leq \bar{u}$ on $\mathbb{D}$, which implies that:
		
		$$\frac{\partial \tilde{u}}{\partial\nu}(x)\leq\frac{\partial u }{\partial \nu}(x) = 2 h(x) e^{u/2} - 2= 2 h(x) e^{\tilde{u}/2} - 2 \quad \text{ on } \partial \D.$$
		This implies that $\tilde{h} \leq h$. The regularity of $\tilde{u}$ follows from classical Schauder estimates. 
	\end{proof}

	\begin{proof}[\underline{Proof of Theorem \ref{thm: nonexistence}}]
		By contradiction, suppose there exists a solution for the semilinear elliptic problem (\ref{eq: Problem 1}) with $K(x)\leq-1$ and $h(x)\leq 1$. By Lemma \ref{lem: sub-sup}, consider a solution $\tilde u$ to problem \eqref{K=-1} with $\tilde{h}(x) \leq -1$ for all $x \in \partial \D$. By a classical result mainly due by Liouville \cite{Liouville1853} (see also \cite[Section 3.2]{Yang2011}), there exists a locally univalent holomorphic function $g$ (with $|g|<1$) in $\mathbb{D}$ such that 
		
		\begin{equation}
			\tilde u=\log{\frac{4|g'|^2}{(1-|g|^2)^2}}.
			\label{eq: Liouville Sol a=-2}
		\end{equation}
		
		Now note that, by (\ref{eq: Liouville Sol a=-2}), $e^{\tilde u}=\frac{4|g'|^2}{(1-|g|^2)^2}$. It is important to point out that since $\tilde u \in C^{1}(\overline{\D})$, then $g \in C^{2}(\overline{\D})$ (see the derivation of $g$ given in \cite[Section 3.2]{Yang2011}). Moreover,
		
		\begin{equation*}
			g:\left(\bar{\mathbb{D}},m_{0}\right)\longrightarrow\left(\mathbb{P},m_{\mathbb{P}}\right),
		\end{equation*}
		is a local isometry. Here $m_{0}=e^{\tilde u}$, $\left(\mathbb{P},m_{\mathbb{P}}\right)$ denotes the Poincar\'{e} disk model for 2-dimensional hyperbolic metric spaces, so that $\mathbb{H}$ is the unit disk and $m_{\mathbb{P}}=\frac{4|dz|^2}{(1-|z|^2)^2}$ is the Poincar\'{e} metric defined on it. 
		
		%
		%
		
		Then we can consider $\Omega:=g(\bar{\mathbb{D}})\subset\mathbb{P}$. Now, we denote $D_r$ a disk centered at the origin with euclidean radius $r \in (0,1)$, and $C_r$ its boundary. If $r$ is sufficiently close to $1$, we have that $\Omega\subset D_r$. We make $r$ decrease up to a first contact, that is, we take $r$ such that:
		
		$$ C_r \cap g(\overline{\D}) \neq \emptyset, \ g(\overline{\D}) \subset D_r.$$
		
		Take $p \in C_r \cap g(\overline{\D})$. Clearly $p \in \partial g(\overline{\D})$: since $g$ is open, there exists $q \in \partial \D$ with $g(q)=p$. Moreover $ \gamma = g(\partial \D)$ is an inmersed $C^2$ curve inside $\overline{D}_r$, tangential to $C_r$ at $g(q)$. Since $g$ is a local isometry, $h(g(q)) \leq 1$, but this is a contradiction since the geodesic curvature of $C_r$ is $\frac{r^2+1}{2r}>1$ (see Figure \ref{fig:Poincare_Model}). 
		
		\begin{figure}[ht]
			\centering
			\includegraphics[width=9.5cm]{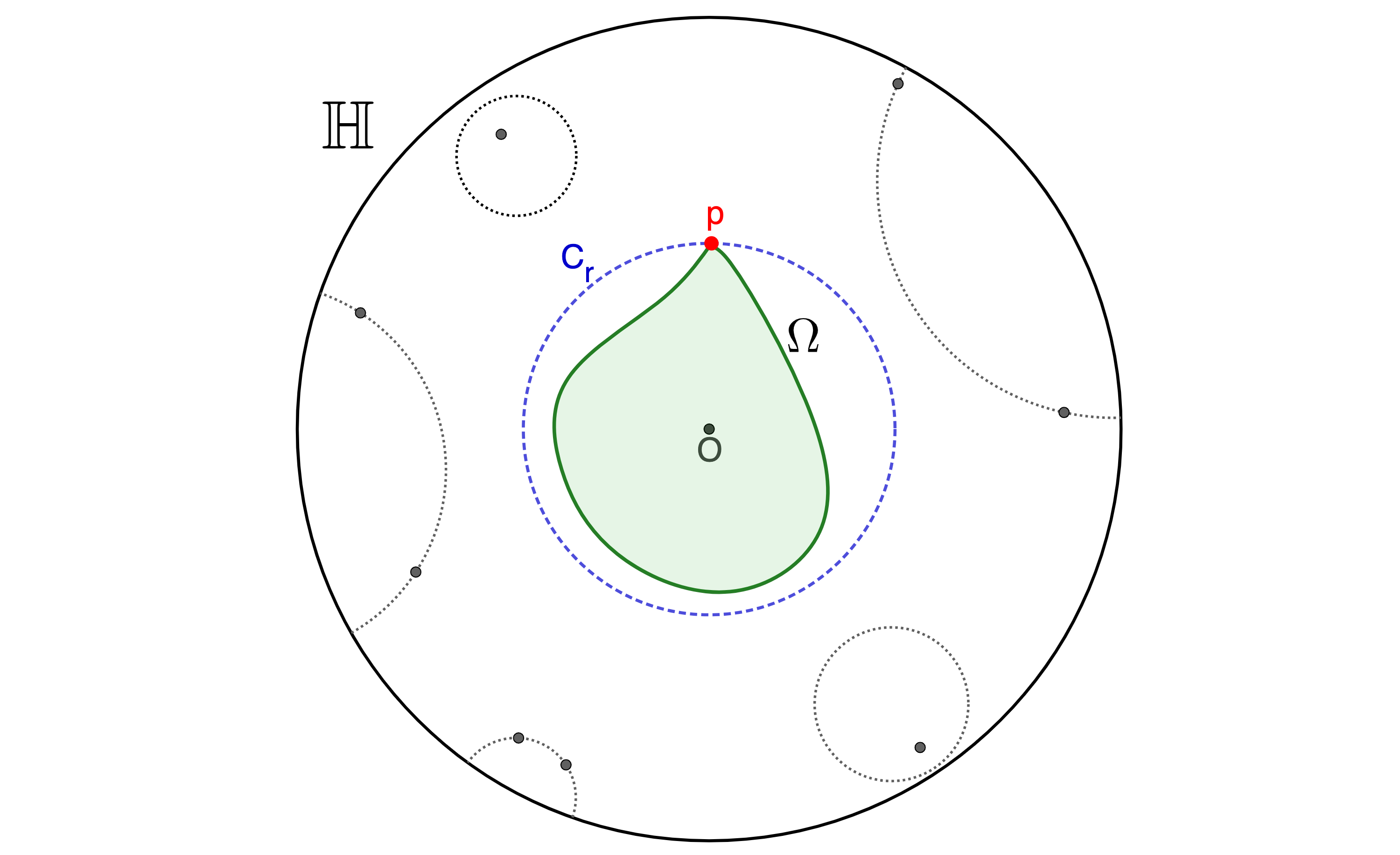}
			\caption{Construction in $\mathbb{H}$.}
			\label{fig:Poincare_Model}
		\end{figure}

	\end{proof}
	
	\appendix
	\numberwithin{equation}{section}
	\section{Tests functions}\label{sec: appendix}
	In this section, we detail the proof of two lemmas, which together imply the validity of Lemma \ref{lem: bubble}. We divide the discussion into two parts, according to the types of symmetry groups $G$ that we have chosen throughout this paper.\\
	
	\textbf{Case 1:} $\mathbf{G = \langle g_k \rangle.}$ Consider $k$-points $p_i$ located on $\partial\mathbb{D}$ and let $q_i=p_i+rn(p_i)$, where $n$ is the outward normal vector to $\partial\mathbb{D}$ and $r>0$ is small enough such that $q_i$ belongs to a regular extension of $\mathbb{D}$. Given a parameter $\mu$ such that $\mu d(x,q_i)>1$ for every $x\in\mathbb{D}$, we define the functions 
	
	\begin{equation*}
		\label{eq: phis}
		\begin{array}{cc}
			\varphi_{\mu,q}:\mathbb{D}\rightarrow\mathbb{R}, & \varphi_{\mu,q}(x)=\log\frac{4\mu^2}{\left(\mu^2d^2(x,q)-1\right)^2},\\ \\

			\tilde{\varphi}_{\mu,q}:\mathbb{D}\rightarrow\mathbb{R}, & \tilde{\varphi}_{\mu,q}(x)=\varphi_{\mu,q}(x)-\log|K(x)|,
		\end{array}
	\end{equation*}
	and consider also the functions 
	
	\begin{equation}\label{eq: LogSumgofBubbles}
		\begin{array}{cc}
			\varPhi_{\mu,k}:\mathbb{D}\rightarrow\mathbb{R}, & \varPhi_{\mu,k}(x)=\log{\left(\sum_{i=1}^{k}e^{\varphi_{\mu,q_i}(x)}\right)}, \\ \\
			
			\tilde{\varPhi}_{\mu,k}:\mathbb{D}\rightarrow\mathbb{R}, & \tilde{\varPhi}_{\mu,k}(x)={\varPhi}_{\mu,k}(x)-\log|K(x)|.
		\end{array}
	\end{equation}
	
	\begin{lemma}\label{lem:testrota}
		Let $p\in\partial\mathbb{D}$ such that $\mathfrak{D}(p)>1$ and let $\mathcal{I}$ be as in \eqref{eq: Energy Functional Disk}. Then there exists $r>0$ and a function $\tilde{\varPhi}_{\mu,k}$ be defined in \eqref{eq: LogSumgofBubbles} such that
		\begin{equation}
			\label{eq: bubble}
			\mathcal{I}\left(\tilde{\varPhi}_{\mu,k}\right)\rightarrow-\infty,\quad\int_{\partial\mathbb{D}}e^{\tilde{\varPhi}_{\mu,k}/2}\rightarrow+\infty\quad as\quad\mu\rightarrow\frac{1}{d(p,q)}=\frac{1}{r}.
		\end{equation}
		
		\begin{proof} 
			
			For convenience, we will rewrite the energy functional as follows
			
			\begin{equation*}
				\begin{split}
					\mathcal{I}\left(\tilde{\varPhi}_{\mu,k}\right)&=\int_{\mathbb{D}}\left(\frac{1}{2}|\nabla \tilde{\varPhi}_{\mu,k}|^2-2Ke^{\tilde{\varPhi}_{\mu,k}}\right)+\int_{\partial\mathbb{D}}\left(2\tilde{\varPhi}_{\mu,k}-4h\sqrt{e^{\tilde{\varPhi}_{\mu,k}}}\right)\\
					&=\int_{\mathbb{D}}\left(\frac{1}{2}|\nabla \tilde{\varPhi}_{\mu,k}|^2+2e^{{\varPhi}_{\mu,k}}\right)+\int_{\partial\mathbb{D}}\left(2\tilde{\varPhi}_{\mu,k}-4\mathfrak{D}\sqrt{e^{{\varPhi}_{\mu,k}}}\right).
				\end{split}
			\end{equation*}
			
			Note that, letting $\varepsilon>0$ and using the Young's inequality we can estimate the first term of last equation as
			
			\begin{equation}\label{eq: gradientestimate}
				\frac{1}{2}\int_{\mathbb{D}}|\nabla\tilde{\varPhi}_{\mu,k}|^2\leq\left(\frac{1}{2}+\varepsilon\right)\int_{\mathbb{D}}|\nabla \varPhi_{\mu,k}|^2+O(1).
			\end{equation}
			
			We claim that the function $\varPhi_{\mu,k}$ defined in (\ref{eq: LogSumgofBubbles}) satisfies the following estimates:
			
			\begin{equation}\label{eq: GradientofLog}
				\int_{\mathbb{D}}|\nabla\varPhi_{\mu,k}|^2\leq\frac{8k\pi}{\sqrt{\mu^2 r^2-1}}+o\left(\frac{1}{\sqrt{\mu^2 r^2-1}}\right),
			\end{equation}
			
			\begin{equation}\label{eq: ExpofLog}
				\int_{\mathbb{D}}e^{\varPhi_{\mu,k}}\leq\frac{2k\pi\mu r}{\sqrt{\mu^2 r^2-1}}+o\left(\frac{1}{\sqrt{\mu^2 r^2-1}}\right),
			\end{equation}
			
			\begin{equation}\label{eq: ExpofLogBoundary}
				\int_{\partial\mathbb{D}}\mathfrak{D}e^{\frac{\varPhi_{\mu,k}}{2}}\geq \min_{B_p(r)\cap\partial\mathbb{D}}\mathfrak{D}\frac{2k\pi}{\sqrt{\mu^2r^2-1}}+o\left(\frac{1}{\sqrt{\mu^2r^2-1}}\right),
			\end{equation}
			
			\begin{equation}\label{eq: LogofPhis}
				\int_{\partial\mathbb{D}}\varPhi_{\mu,k}\leq O(1).
			\end{equation}
			
			From these, the assertion of (\ref{eq: bubble}) follows immediately.

			\medskip It should be noted that in the following we obtain integral expressions that have already been computed in the appendix of \cite{LopezSorianoMalchiodiRuiz2019}. However, for the sake of completeness, we include below the whole argument.\\
			
			\underline{Estimate (\ref{eq: GradientofLog})}.
			First of all, note that the gradient of $\varPhi_{\mu,k}$ is given by
			
			\begin{equation}\label{eq: GradientPhi}
				\nabla\varPhi_{\mu,k}=\frac{\sum_{i=1}^{k}\nabla\varphi_{\mu,q_i}e^{\varphi_{\mu,q_i}}}{\sum_{i=1}^{k}e^{\varphi_{\mu,q_i}}}.
			\end{equation}
			
			We can split the integral (\ref{eq: GradientofLog}) as
			
			\begin{equation}
				\label{eq: SplitGradientofSum}
				\int_{\mathbb{D}}|\nabla\varPhi_{\mu,k}|^2=\sum_{i=1}^k\int_{A_i}|\nabla\varPhi_{\mu,k}|^2+\int_{\mathbb{D}\setminus(\cup_{i=1}^{k}A_i)}|\nabla\varPhi_{\mu,k}|^2,
			\end{equation}
			where $A_i=\mathbb{D}\cap B_{q_i}(2r)$, and $B_{q_i}(2r)$ denote a ball of center $q_i$ and radius $2r$. 
			
			\begin{figure}[ht]
				\centering
				\includegraphics[width=9.5cm]{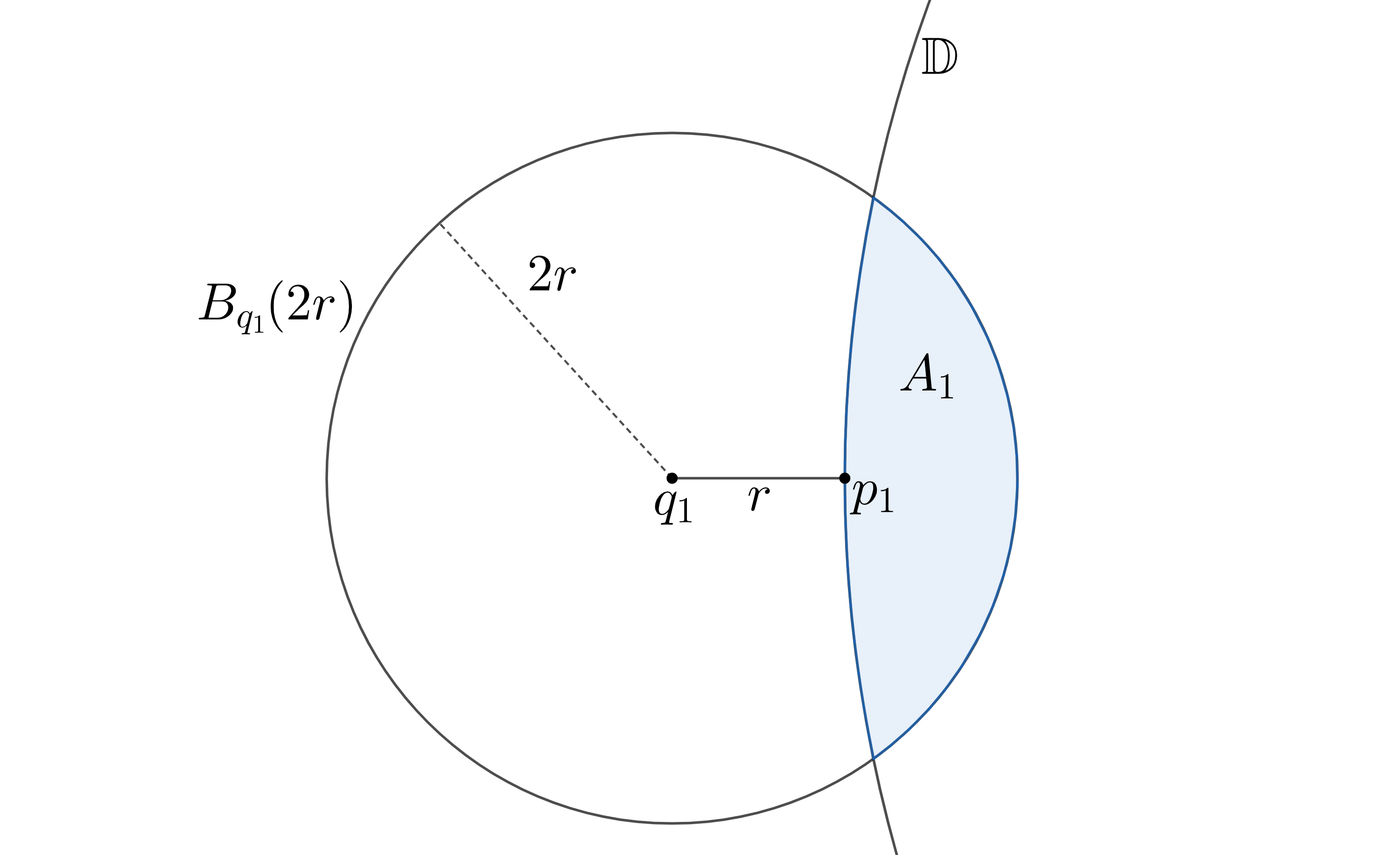}
				\caption{$A_1=\mathbb{D}\cap B_{q_1}(2r)$.}
				\label{fig:Bola q12r.png}
			\end{figure}	
			Observe that, applying $|\nabla d(x,z)^2|\leq 2d(x,z)$, we have
			
			\begin{equation}\label{eq:gradientineq}
				|\nabla\varphi_{\mu,q_i}(x)|=2\mu^2\frac{|\nabla d^2(x,q_i)|}{\mu^2d^2(x,q_i)-1}\leq4\mu^2\frac{d(x,q_i)}{\mu^2d^2(x,q_i)-1},\quad\text{for every }x\in\mathbb{D}.
			\end{equation}
			Then, for any $x\in\mathbb{D}\setminus (\cup_{i=1}^{k}A_i)$ one has
			
			\begin{equation}\label{eq: inequalitiesDminus}
				\left|\nabla\varphi_{\mu,q_i}\right|\leq\frac{8r\mu^2}{4\mu^2r^2-1},\quad\left|e^{\varphi_{\mu,q_i}}\right|\leq\frac{4\mu^2}{(4\mu^2r^2-1)^2}.
			\end{equation}
			Then, using the fact that for any $i,j=1,\ldots,k$, such that $i\neq j$, we have
			
			\begin{equation*}
				\int_{A_i}\left|\nabla\varPhi_{\mu,k}\right|^2=\int_{A_j}\left|\nabla\varPhi_{\mu,k}\right|^2,
			\end{equation*}
			the equation (\ref{eq: SplitGradientofSum}) can be controlled by
			
			\begin{equation*}
				\int_{\mathbb{D}}|\nabla\varPhi_{\mu,k}|^2\leq k\int_{A_1}|\nabla\varPhi_{\mu,k}|^2+O(1).
			\end{equation*}
			
			Now, using \eqref{eq: GradientPhi} we have
			
			\begin{equation*}
				\begin{split}
					\int_{A_1}|\nabla\varPhi_{\mu,k}|^2&=\int_{A_1}\left|\frac{\nabla\varphi_{\mu,q_1}e^{\varphi_{\mu,q_1}}}{\sum_{i=1}^{k}e^{\varphi_{\mu,q_i}}}\right|^2+\int_{A_1}\left|\frac{\sum_{i=2}^{k}\nabla\varphi_{\mu,q_i}e^{\varphi_{\mu,q_i}}}{\sum_{i=1}^{k}e^{\varphi_{\mu,q_i}}}\right|^2\\
					&+2\int_{A_1}\frac{\left|\nabla\varphi_{\mu,q_1}e^{\varphi_{\mu,q_1}}\sum_{i=2}^{k}\nabla\varphi_{\mu,q_i}e^{\varphi_{\mu,q_i}}\right|}{\left|\sum_{i=1}^{k}e^{\varphi_{\mu,q_i}}\right|^2}\\
					&\leq I_1+I_2+O(1),
				\end{split}
			\end{equation*}
			where $I_1$ and $I_2$ are defined as
			
			\begin{equation*}
				I_1:=\int_{A_1}|\nabla\varphi_{\mu,q_1}|^2\quad\text{  and }\quad I_2:=\int_{A_1}\frac{\left|\nabla\varphi_{\mu,q_1}e^{\varphi_{\mu,q_1}}\sum_{i=2}^{k}\nabla\varphi_{\mu,q_i}e^{\varphi_{\mu,q_i}}\right|}{\left|\sum_{i=1}^{k}e^{\varphi_{\mu,q_i}}\right|^2}.
			\end{equation*}
			
			Note that, by Hölder inequality and (\ref{eq: inequalitiesDminus}), we can control the second integral by $|\nabla\varphi_{\mu,q}|$, thus
			
			\begin{equation*}
				\begin{split}
					I_2:=&\int_{A_1}\frac{\left|\nabla\varphi_{\mu,q_1}e^{\varphi_{\mu,q_1}}\sum_{i=2}^{k}\nabla\varphi_{\mu,q_i}e^{\varphi_{\mu,q_i}}\right|}{\left|\sum_{i=1}^{k}e^{\varphi_{\mu,q_i}}\right|^2}\\
					&\leq\left(\int_{A_1}\left|\frac{\nabla\varphi_{\mu,q_1}e^{\varphi_{\mu,q_1}}}{\sum_{i=1}^{k}e^{\varphi_{\mu,q_i}}}\right|^2\right)^{\frac{1}{2}}\left(\int_{A_1}\left|\frac{\sum_{i=2}^{k}\nabla\varphi_{\mu,q_i}e^{\varphi_{\mu,q_i}}}{\sum_{i=1}^{k}e^{\varphi_{\mu,q_i}}}\right|^2\right)^{\frac{1}{2}}\\
					&\leq\left(\int_{A_1}\left|\nabla\varphi_{\mu,q_1}\right|^2\right)^{\frac{1}{2}}+O(1).
				\end{split}
			\end{equation*}
			
			Thus, the following estimate for the equation (\ref{eq: SplitGradientofSum}) is obtained
			
			\begin{equation*}
				\int_{\mathbb{D}}|\nabla\varPhi_{\mu,k}|^2\leq k\left(I_1+\sqrt{I_1}\right)+O(1).
			\end{equation*}
			
			Our next step is to quantize the right-hand side of the equation. To do this, suppose that $q_1$ is close enough to $p_1$, such that $d^2(x,q_1)=r^2+o_{r}(1)$, and for simplicity assume $p_1=(-1,0)$, then one can find that
			
			\begin{equation*}
				\begin{split}
					I_1&\leq16\mu^4\int_{A_1}\frac{d^2(x,q_1)}{\left(\mu^2d^2(x,q_1)-1\right)^2}\\
					&\leq16\mu^4\int_{A^{+}}\frac{r^2+o_{r}(1)}{\left(\mu^2d^2(x,q_1)-1\right)^2},
				\end{split}
			\end{equation*}
			where 
			
			\begin{equation*}
				A^{+}=\left\{(x_1,x_2)\in\mathbb{R}^2:\left(x_1+1+r\right)^2+x_2^2\leq4r^2,x_1\geq-1\right\}.    
			\end{equation*}
			
			\begin{figure}[ht]
				\centering
				\includegraphics[width=9.5cm]{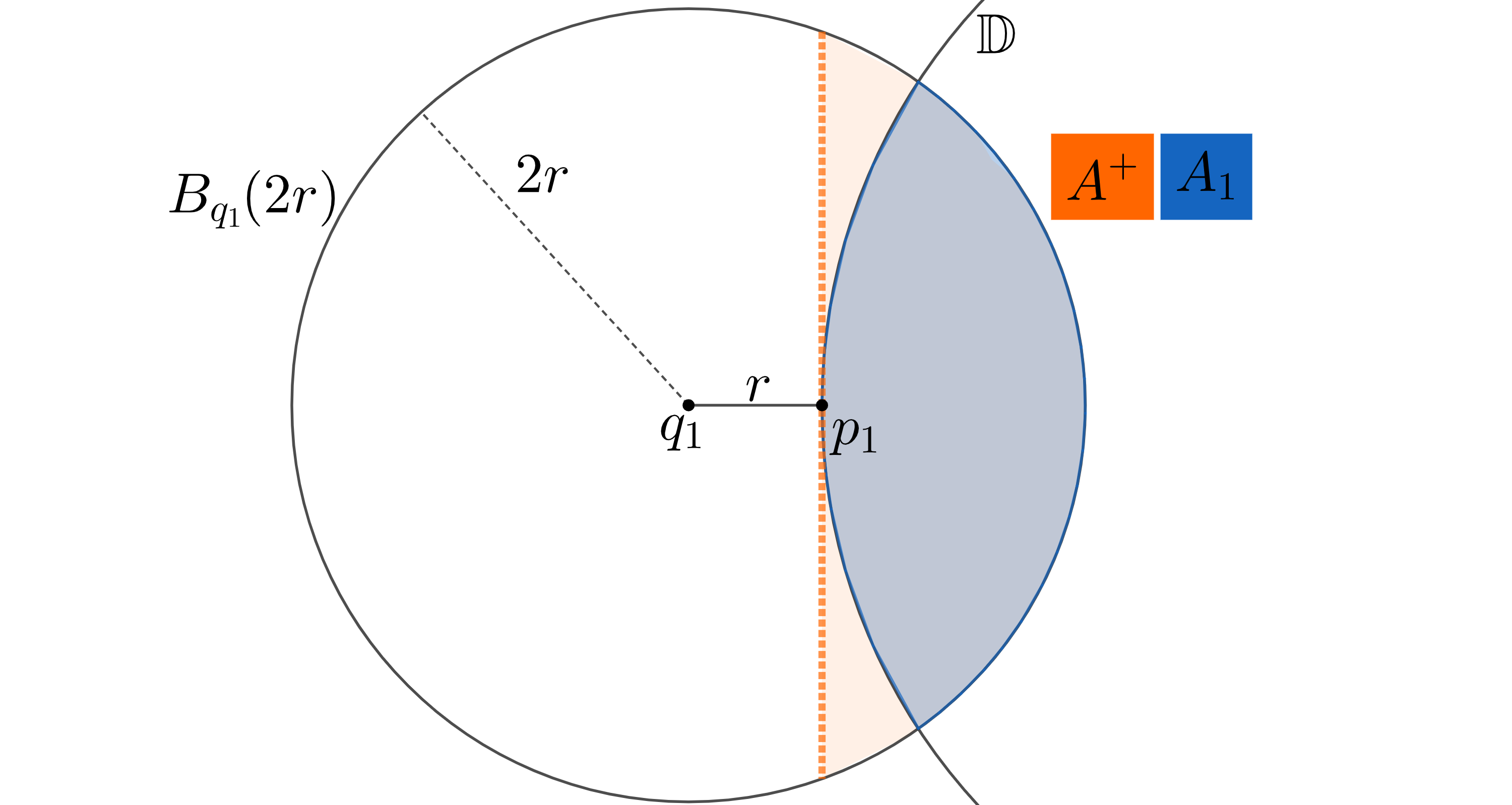}
				\caption{$A^+$ and $A_1$.}
				\label{fig:A+ y A1.png}
			\end{figure}

			Now, taking polar coordinates ($\theta$, $\rho$) with center at $q_1$ and considering the change of variable $t=\mu^2\rho^2-1$, we have
			
			\begin{equation*}
				\begin{split}
					(A):=\int_{A^{+}}\frac{1}{\left(\mu^2d^2(x,q_1)-1\right)^2}&=2\int_{0}^{\frac{\pi}{3}}\int_{\frac{r}{\cos{\theta}}}^{2r}\frac{\rho}{\left(\mu^2\rho^2-1\right)^2}d\rho d\theta\\
					=\frac{1}{\mu^2}\int_{0}^{\frac{\pi}{3}}\int_{\frac{\mu^2r^2}{\cos^2{\theta}}-1}^{4\mu^2r^2-1}\frac{1}{t^2}dt d\theta&=\frac{1}{\mu^2}\int_{0}^{\frac{\pi}{3}}\frac{\cos^2{\theta}}{\mu^2r^2-\cos^2{\theta}} d\theta+O(1)\\
					=\frac{\left[\mu r \arctan{\frac{\mu r\tan{\theta}}{\sqrt{\mu^2 r^2-1}}}\right]_{0}^{\frac{\pi}{3}}}{\mu^2\sqrt{\mu^2 r^2-1}}+O(1)&=\frac{\pi r}{2\mu\sqrt{\mu^2 r^2-1}}+O(1).
				\end{split}
			\end{equation*}
			
			So we can conclude that
			
			\begin{equation*}
				I_1\leq\frac{8\pi\mu^3r^3}{\sqrt{\mu^2 r^2-1}}+o\left(\frac{1}{\sqrt{\mu^2 r^2-1}}\right),
			\end{equation*}
			and, therefore
			
			\begin{equation*}
				I_2\leq\frac{2\mu r\sqrt{2\pi\mu r}}{\sqrt[4]{\mu^2 r^2-1}}+o\left(\frac{1}{\sqrt[4]{\mu^2 r^2-1}}\right).
			\end{equation*}
			And with this we finish the proof of (\ref{eq: GradientofLog}).\\
			
			\underline{Estimate of (\ref{eq: ExpofLog}):}
			As before, we decompose the expression of (\ref{eq: ExpofLog}) by
			
			\begin{equation}\label{eq: EplitExpofPhis}
				\int_{\mathbb{D}}e^{\varPhi_{\mu,k}}=k\int_{A_1}e^{\varPhi_{\mu,k}}+\int_{\mathbb{D}\setminus(\cup_{i=1}^{k}A_i)}e^{\varPhi_{\mu,k}}.
			\end{equation}
			
			Note that, using the inequalities collected in (\ref{eq: inequalitiesDminus}), we can control the integral at $\mathbb{D}\setminus(\cup_{i=1}^{k}A_i)$ so it is sufficient that we focus on the one in $A_1$. Then, by definition (\ref{eq: LogSumgofBubbles}) we have
			
			\begin{equation*}
				\int_{A_1}e^{\varPhi_{\mu,k}}=\int_{A_1}e^{\varphi_{\mu,q_1}}+\int_{A_1}\sum_{i=2}^ke^{\varphi_{\mu,q_i}}\leq\int_{A_1}e^{\varphi_{\mu,q_1}}+O(1),
			\end{equation*}    
			
			By simple computations we can find that	
			
			\begin{equation*}
				\int_{A_1}e^{\varphi_{\mu,q_1}}=2\int_{A^+}\frac{4\mu^2 dx}{(\mu^2d(x,q_1)^2-1)^2}=8\mu^2\cdot (A)=\frac{2\pi\mu r}{\sqrt{\mu^2 r^2-1}}+O(1),
			\end{equation*}    
			
			and (\ref{eq: ExpofLog}) comes from $(\ref{eq: EplitExpofPhis})$.\\
			
			\underline{Estimate of (\ref{eq: ExpofLogBoundary}):}
			Following the same strategy as before first we divide the integral (\ref{eq: ExpofLogBoundary}) as
			
			\begin{equation*}   
				\int_{\partial\mathbb{D}}\mathfrak{D}e^{\frac{\varPhi_{\mu,k}}{2}}=k\int_{A_1}\mathfrak{D}e^{\frac{\varPhi_{\mu,k}}{2}}+\int_{\mathbb{D}\setminus(\cup_{i=1}^{k}A_i)}\mathfrak{D}e^{\frac{\varPhi_{\mu,k}}{2}}.
			\end{equation*}
			where $A_i=\partial\mathbb{D}\cap B_{p_i}(r)$. 
			\begin{figure}[ht]
				\centering
				\includegraphics[width=9.5cm]{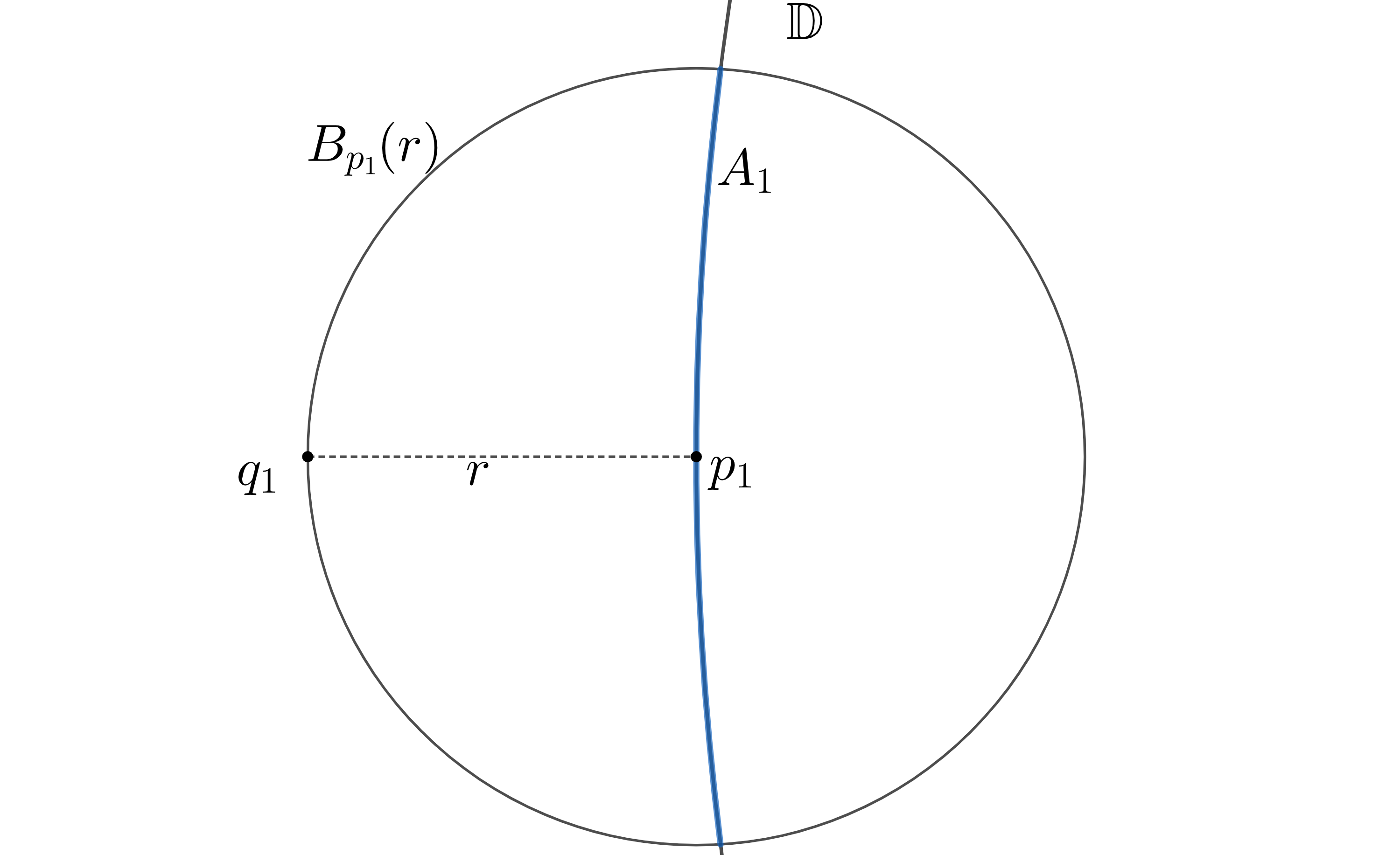}
				\caption{$A_1=\partial\mathbb{D}\cap B_{p_1}(r)$.}
				\label{fig:Bola P1r.png}
			\end{figure}
			
			Now, since we can control from below each integral by the minimum of $\mathfrak{D}$ and the inequalities (\ref{eq: inequalitiesDminus}), we have
			
			\begin{equation*}
				\begin{split}   
					\int_{\partial\mathbb{D}}\mathfrak{D}e^{\frac{\varPhi_{\mu,k}}{2}}&\geq k\min_{x\in A_1}\mathfrak{D}(x)\int_{A_1}e^{\frac{\varPhi_{\mu,k}}{2}}+\min_{x\in \mathbb{D}\setminus(\cup_{i=1}^{k}A_i)}\mathfrak{D}(x)\int_{\mathbb{D}\setminus(\cup_{i=1}^{k}A_i)}e^{\frac{\varPhi_{\mu,k}}{2}}\\
					&\geq k\min_{x\in A_1}\mathfrak{D}(x)\int_{A_1}e^{\frac{\varPhi_{\mu,k}}{2}}+O(1).
				\end{split}
			\end{equation*}
			And  the integral on $A_1$ can be controlled by
			
			\begin{equation*}
				\int_{A_1}e^{\frac{\varPhi_{\mu,k}}{2}}=\int_{A_1}\left(\sum_{i=1}^ke^{\varphi_{\mu,q_i}}\right)^{1/2}\geq\int_{A_1}e^{\frac{\varphi_{\mu,q_1}}{2}}.
			\end{equation*}
			Finally (\ref{eq: ExpofLogBoundary}) comes from the following computation.
			
			\begin{equation*}
				\int_{A_1}e^{\frac{\varphi_{\mu,q_1}}{2}}=2\int_{A^+}\frac{2\mu dx}{\mu^2d(x,q_1)^2-1}=2\oint_{0}^{r}\frac{2\mu dx_2}{\mu^2(r^2+x_2^2)-1}=\frac{2\pi}{\sqrt{\mu^2 r^2-1}}+O(1).
			\end{equation*}
			
			\underline{Estimate of (\ref{eq: LogofPhis}):}
			First, note that
			
			\begin{equation*}
				\int_{\partial\mathbb{D}}\tilde{\varPhi}_{\mu,k}\leq \int_{\partial\mathbb{D}}\varPhi_{\mu,k}+O(1). 
			\end{equation*}
			Now, if we split the right hand of the last inequality as before, we have
			
			\begin{equation*}
				\begin{split}
					\int_{\partial\mathbb{D}}\varPhi_{\mu,k}&=k\int_{A_1}\varPhi_{\mu,k}+\int_{\mathbb{D}\setminus(\cup_{i=1}^{k}A_i)}\varPhi_{\mu,k}\\    		
					&\leq k\int_{A_1}\varPhi_{\mu,k}+O(1). 
				\end{split}
			\end{equation*}
Next, since for any $x\in\partial\mathbb{D}\setminus A_1$ one have that $d(x,q_1)<d(x,q_i)$ for $2\leq i<k$, then
			
			\begin{equation*}
				\begin{split}
					\int_{A_1}\varPhi_{\mu,k}&=\int_{A_1}\log{\left(\sum_{i=1}^ke^{\varphi_{\mu,q_i}}\right)}\\    		
					&\leq \int_{A_1}\log{\left(ke^{\varphi_{\mu,q_1}}\right)}\\
					&\leq \int_{A_1}\log{\left(e^{\varphi_{\mu,q_1}}\right)}+O(1).
				\end{split}
			\end{equation*}
			
			Then, we can estimate
			
			\begin{equation*}
				\begin{split}
					\int_{A_1}\varphi_{\mu,q_1}&=4\int_{0}^{r}\log\frac{2\mu}{\mu^2(r^2+x_2^2)-1}dx_2\\
					&=4\left[x_2\log\frac{2\mu}{\mu^2(r^2+x_2^2)-1}\right]_{0}^{r}+O(1)=O(1).
				\end{split}
			\end{equation*}
			This concludes (\ref{eq: LogofPhis}).
		\end{proof}
	\end{lemma}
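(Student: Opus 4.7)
The plan is to exploit the fact that each $\varphi_{\mu,q_i}$ behaves like a concentrating bubble as $\mu\to 1/r$, with mass on $\D$ and on $\partial\D$ of order $1/\sqrt{\mu^2 r^2-1}$. The shift by $-\log|K|$ in passing from $\varPhi_{\mu,k}$ to $\tilde{\varPhi}_{\mu,k}$ is designed so that, using $K<0$, the interior nonlinearity simplifies as $-2K e^{\tilde{\varPhi}_{\mu,k}}=2e^{\varPhi_{\mu,k}}$ and the boundary one as $-4h\,e^{\tilde{\varPhi}_{\mu,k}/2}=-4\mathfrak{D}\,e^{\varPhi_{\mu,k}/2}$. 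Thus the scale-invariant function $\mathfrak{D}$ emerges naturally, and the hypothesis $\mathfrak{D}(p)>1$ is the sharp threshold that makes the boundary term dominate.

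First I would use Young's inequality to absorb the contribution of $\nabla\log|K|$ into the main gradient term at the cost of an $O(\varepsilon)$ factor and an $O(1)$ error, and observe that $\int_{\partial\D}2\tilde{\varPhi}_{\mu,k}$ differs from $\int_{\partial\D}2\varPhi_{\mu,k}$ only by an $O(1)$ term. Next I would localise: split $\D$ into the $k$ neighbourhoods $A_i=\D\cap B_{q_i}(2r)$ together with their complement, on which all densities are uniformly bounded. By the $k$-fold symmetry of the construction, it is enough to handle a single piece $A_1$. Inside $A_1$ the bubbles $\varphi_{\mu,q_i}$ for $i\geq 2$ stay bounded, so each contribution reduces, up to a cross term treated by H\"older's inequality, to a single-bubble integral in $\varphi_{\mu,q_1}$. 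These single-bubble integrals I would then compute explicitly in polar coordinates centred at $q_1$ with the substitution $t=\mu^2\rho^2-1$, producing divergent parts of order $1/\sqrt{\mu^2 r^2-1}$.

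Collecting the leading coefficients, one obtains a gradient contribution $\lesssim 4k\pi/\sqrt{\mu^2 r^2-1}$, an interior exponential contribution $\lesssim 4k\pi/\sqrt{\mu^2 r^2-1}$, and a boundary exponential contribution $\gtrsim 8k\pi\,\mathfrak{D}_{\min}/\sqrt{\mu^2 r^2-1}$, while the remaining linear boundary piece stays bounded. By continuity of $\mathfrak{D}$ together with $\mathfrak{D}(p)>1$, choosing $r$ sufficiently small ensures $\mathfrak{D}_{\min}>1$ on $B_{p_1}(r)\cap\partial\D$, so the net coefficient of $1/\sqrt{\mu^2 r^2-1}$ in $\mathcal{I}(\tilde{\varPhi}_{\mu,k})$ is strictly negative as $\mu\to 1/r$. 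This forces $\mathcal{I}(\tilde{\varPhi}_{\mu,k})\to -\infty$, while the lower bound on the boundary integral directly yields $\int_{\partial\D}e^{\tilde{\varPhi}_{\mu,k}/2}\to+\infty$. The main technical difficulty I anticipate is the gradient estimate: since $\nabla\varPhi_{\mu,k}$ is a weighted average of the $\nabla\varphi_{\mu,q_i}$, the cross-bubble interactions must be shown, via H\"older, to be only of order $\sqrt{I_1}$ — strictly subleading with respect to the diagonal $I_1:=\int_{A_1}|\nabla\varphi_{\mu,q_1}|^2$ — so that the leading constant is the sharp $4k\pi$ and not larger, which is exactly what is needed to offset the boundary gain $8k\pi\,\mathfrak{D}_{\min}$.
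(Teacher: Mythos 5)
Your proposal is correct and follows essentially the same route as the paper's proof: the same rewriting of the functional in terms of $\mathfrak{D}$, the same Young/H\"older absorption of the $\nabla\log|K|$ and cross-bubble terms, the same localisation to the sets $A_i$ with reduction to a single bubble by symmetry, and the same polar-coordinate computation with the substitution $t=\mu^2\rho^2-1$; your leading constants ($4k\pi$, $4k\pi$, $8k\pi\mathfrak{D}_{\min}$) match the paper's and give the correct negative balance under $\mathfrak{D}(p)>1$.
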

	
	\textbf{Case 2:} $\mathbf{G=O(2).}$ Given a parameter $\mu>1$, for every $x\in\mathbb{D}$ we define the functions
	\begin{equation}\label{eq: radialBubbles}
		\begin{array}{cc}
			\varphi_{\mu}:\mathbb{D}\rightarrow\mathbb{R}, & \varphi_{\mu}(x)=2\log\frac{2\mu}{\mu^2-|x|^2},\\ \\

			\tilde{\varphi}_{\mu}:\mathbb{D}\rightarrow\mathbb{R}, & \tilde{\varphi}_{\mu}(x)=\varphi_{\mu}(x)-\log|K(x)|.
		\end{array}
	\end{equation}
	Note that if $\mu$ tends to 1, then the function $\tilde{\varphi}_{\mu}$ blows up on the entire boundary of the disk. In particular, the blow-up set is infinite here.
	\begin{lemma}\label{lem:testradial}
		Under the radial assumptions and $\mathfrak{D}(x)=\tilde{\mathfrak{D}}>1$, let $\mathcal{I}$ defined in \eqref{eq: Energy Functional Disk}, then there exists a function $\tilde{\varphi}_{\mu}$ defined in \eqref{eq: radialBubbles} such that
		
		\begin{equation*}
			\label{eq: radialbubble}
			\mathcal{I}\left(\tilde{\varphi}_{\mu}\right)\rightarrow-\infty,\quad\int_{\partial\mathbb{D}}e^{\tilde{\varphi}_{\mu}/2}\rightarrow+\infty\quad as\quad\mu\searrow1.
		\end{equation*}
		\begin{proof}
			Let us remember the energy functional we are working with.
			
			\begin{equation*}
				\mathcal{I}\left(\tilde{\varphi}_{\mu}\right)=\int_{\mathbb{D}}\left(\frac{1}{2}|\nabla \tilde{\varphi}_{\mu}|^2+2e^{{\varphi}_{\mu}}\right)+\int_{\partial\mathbb{D}}\left(2\tilde{\varphi}_{\mu}-4\tilde{\mathfrak{D}}\sqrt{e^{{\varphi}_{\mu}}}\right).
			\end{equation*}
			
			As in the proof of the previous lemma, we will now estimate each of the parts of the above expression. First of all, note that we can control the gradient as in \eqref{eq: gradientestimate} by
			
			\begin{equation*}
				\frac{1}{2}\int_{\mathbb{D}}|\nabla\tilde{\varphi}_{\mu}|^2\leq\left(\frac{1}{2}+\varepsilon\right)\int_{\mathbb{D}}|\nabla \varphi_{\mu}|^2+O(1),
			\end{equation*}
			where,
			
			\begin{equation*}
				\int_{\mathbb{D}}|\nabla\varphi_{\mu}|^2=\int_{\mathbb{D}}\frac{16dx}{(\mu^2-|x|^2)^2}=32\pi\int_{0}^{1}\frac{\rho d\rho}{(\mu^2-\rho^2)^2}=32\pi\cdot(C).
			\end{equation*}
			Moreover, the other integral defined inside the domain is equal to
			
			\begin{equation*}
				\int_{\mathbb{D}}e^{\varphi_{\mu}}=\int_{\mathbb{D}}\frac{4\mu^2dx}{(\mu^2-|x|^2)^2}=8\pi\mu^2\pi\int_{0}^{1}\frac{\rho d\rho}{(\mu^2-\rho^2)^2}=8\pi\mu^2\cdot(C).
			\end{equation*}
			Therefore, to study the first integral of the energy functional it is sufficient to compute $(C)$. Thus
			
			\begin{equation*}
				(C)=\int_{0}^{1}\frac{\rho d\rho}{(\mu^2-\rho^2)^2}=\frac{1}{2}\left[\frac{1}{\mu^2-\rho^2}\right]_{\rho=0}^{\rho=1}=\frac{1}{2\mu^2(\mu^2-1)}.
			\end{equation*}
			On the other hand, we have the boundary terms that can be controlled as follows
			
			\begin{equation*}
				\begin{split}
					\int_{\partial\mathbb{D}}\tilde{\mathfrak{D}}e^{\frac{\varphi_{\mu}}{2}}&=\tilde{\mathfrak{D}}\int_{\partial\mathbb{D}}\frac{2\mu}{\mu^2-|x|^2}dx=\tilde{\mathfrak{D}}\frac{4\pi \mu}{\mu^2-1},
				\end{split}	
			\end{equation*}
			and,
			
			\begin{equation*}
				\int_{\partial\mathbb{D}}\varphi_{\mu}=\int_{\partial\mathbb{D}}2\log\frac{2\mu}{\mu^2-|x|^2}=4\pi\log\frac{2\mu}{\mu^2-1}\leq o\left(\frac{1}{\mu^2-1}\right).
			\end{equation*}
			In summary, the following estimates hold:
			
			\begin{equation*}
				\int_{\mathbb{D}}|\nabla\varphi_{\mu}|^2\leq\frac{16\pi}{\mu^2-1}+o\left(\frac{1}{\mu^2-1}\right),
			\end{equation*}
			
			\begin{equation*}
				\int_{\mathbb{D}}e^{\varphi_{\mu}}\leq\frac{4\pi}{\mu^2-1}+o\left(\frac{1}{\mu^2-1}\right),
			\end{equation*}
			
			\begin{equation*}
				\int_{\partial\mathbb{D}}\mathfrak{D}e^{\frac{\varphi_{\mu}}{2}}= \tilde{\mathfrak{D}}\frac{4\pi}{\mu^2-1}+O(1),
			\end{equation*}
			
			\begin{equation*}
				\int_{\partial\mathbb{D}}\varphi_{\mu}\leq o\left(\frac{1}{\mu^2-1}\right).
			\end{equation*}
			And, since $\tilde{\mathfrak{D}}>1$, as $\mu\searrow 1$, we can conclude that
			
			\begin{equation*}
				\mathcal{I}\left(\tilde{\varphi}_{\mu}\right)\leq\frac{16\pi}{\mu^2-1}\left(1-\tilde{\mathfrak{D}}\right)\to-\infty\quad\text{ and }\quad\int_{\partial\mathbb{D}}e^{\tilde{\varphi}_{\mu}/2}\rightarrow+\infty,
			\end{equation*}
			as claimed.
		\end{proof}
	\end{lemma}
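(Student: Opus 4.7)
The strategy is to substitute the explicit radial bubble $\tilde{\varphi}_\mu = \varphi_\mu - \log|K|$, with $\varphi_\mu(x) = 2\log\frac{2\mu}{\mu^2-|x|^2}$, directly into $\mathcal{I}$ and compare the rates at which each of its four pieces diverges as $\mu\searrow 1$. Because $K<0$, the curvature terms simplify nicely: $-2Ke^{\tilde{\varphi}_\mu} = 2e^{\varphi_\mu}$ inside, and on the boundary $h e^{\tilde{\varphi}_\mu/2} = \mathfrak{D}\, e^{\varphi_\mu/2} = \tilde{\mathfrak{D}}\, e^{\varphi_\mu/2}$ (using that $\mathfrak{D}$ is constant under the $O(2)$ symmetry). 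The $\log|K|$ correction on the boundary merely adds $O(1)$ to the linear boundary integral, and inside it produces a cross term in $|\nabla\tilde{\varphi}_\mu|^2$ that can be absorbed, via Young's inequality, into $(\tfrac{1}{2}+\varepsilon)|\nabla\varphi_\mu|^2 + O(1)$ for $\varepsilon>0$ arbitrarily small. After these reductions, the problem reduces to estimating four explicit radial integrals.

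Passing to polar coordinates and substituting $u=\mu^2-\rho^2$, I expect to find, as $\mu\searrow 1$,
\begin{equation*}
\tfrac{1}{2}\!\int_{\D}\!|\nabla\varphi_\mu|^2 = \tfrac{8\pi}{\mu^2-1}+o\!\bigl(\tfrac{1}{\mu^2-1}\bigr), \qquad 2\!\int_{\D}\! e^{\varphi_\mu} = \tfrac{8\pi}{\mu^2-1},
\end{equation*}
\begin{equation*}
4\tilde{\mathfrak{D}}\!\int_{\partial\D}\! e^{\varphi_\mu/2} = \tfrac{16\pi\mu\,\tilde{\mathfrak{D}}}{\mu^2-1}, \qquad 2\!\int_{\partial\D}\!\varphi_\mu = O\!\bigl(|\!\log(\mu^2-1)|\bigr)=o\!\bigl(\tfrac{1}{\mu^2-1}\bigr).
\end{equation*}
Collecting these four terms yields a dominant coefficient of $16\pi(1+\varepsilon-\mu\tilde{\mathfrak{D}})/(\mu^2-1)$, which is strictly negative once $\varepsilon<\tilde{\mathfrak{D}}-1$ and drives $\mathcal{I}(\tilde{\varphi}_\mu)\to -\infty$. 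The second conclusion $\int_{\partial\D}e^{\tilde{\varphi}_\mu/2}\to +\infty$ is immediate from the third estimate together with the fact that $|K|^{-1/2}$ is bounded below on $\partial\D$.

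The main obstacle is the exact bookkeeping of the leading constants. The sum of the Dirichlet and interior exponential pieces contributes $+\tfrac{16\pi}{\mu^2-1}$, which is of the same order as the boundary exponential contribution $-\tfrac{16\pi\tilde{\mathfrak{D}}}{\mu^2-1}$; a negative dominant term is obtained \emph{only} thanks to the strict inequality $\tilde{\mathfrak{D}}>1$ from hypothesis \eqref{eq: hypothesis_2}. This is exactly where that assumption is consumed. A minor technicality is the Young-inequality absorption of the cross term $\nabla\varphi_\mu\cdot\nabla\log|K|$, which requires $|\nabla\log|K||$ to be controlled near the concentration region $\partial\D$; this is ensured by $K\in \mathcal{C}^1$ together with $K<0$ on $\partial\D$, so no genuine difficulty arises.
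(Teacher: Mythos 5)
Your proposal is correct and follows essentially the same route as the paper: the same explicit radial bubble, the same Young-inequality absorption of the $\nabla\log|K|$ cross term into $(\tfrac12+\varepsilon)\int_{\D}|\nabla\varphi_\mu|^2+O(1)$, and the same leading-order bookkeeping in which the interior contributions $\tfrac{8\pi}{\mu^2-1}+\tfrac{8\pi}{\mu^2-1}$ are beaten by the boundary term $-\tfrac{16\pi\tilde{\mathfrak{D}}}{\mu^2-1}$ precisely because $\tilde{\mathfrak{D}}>1$. The constants you predict match the paper's computation, so nothing further is needed.
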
	
	\bibliographystyle{plain}
	\bibliography{references.bib} 
\end{document}